%% LyX 2.0.1 created this file.  For more info, see http://www.lyx.org/.
%% Do not edit unless you really know what you are doing.
\documentclass[oneside,english]{amsart}
\usepackage[T1]{fontenc}
\usepackage[latin9]{inputenc}
\usepackage{babel}
\usepackage{amsthm}
\usepackage{amstext}
\usepackage{amssymb}
\usepackage{esint}
\usepackage[unicode=true,pdfusetitle,
 bookmarks=true,bookmarksnumbered=false,bookmarksopen=false,
 breaklinks=false,pdfborder={0 0 1},backref=false,colorlinks=false]
 {hyperref}
\usepackage{breakurl}

\makeatletter
%%%%%%%%%%%%%%%%%%%%%%%%%%%%%% Textclass specific LaTeX commands.
\numberwithin{equation}{section}
\numberwithin{figure}{section}
\theoremstyle{plain}
\newtheorem{thm}{\protect\theoremname}
  \theoremstyle{plain}
  \newtheorem{prop}[thm]{\protect\propositionname}
  \theoremstyle{definition}
  \newtheorem{defn}[thm]{\protect\definitionname}
  \theoremstyle{remark}
  \newtheorem{rem}[thm]{\protect\remarkname}
  \theoremstyle{plain}
  \newtheorem{lem}[thm]{\protect\lemmaname}
  \theoremstyle{plain}
  \newtheorem{cor}[thm]{\protect\corollaryname}
  \theoremstyle{definition}
  \newtheorem{example}[thm]{\protect\examplename}

%%%%%%%%%%%%%%%%%%%%%%%%%%%%%% User specified LaTeX commands.
\usepackage[a4paper]{geometry}
\usepackage{breqn}

\makeatother

  \providecommand{\corollaryname}{Corollary}
  \providecommand{\definitionname}{Definition}
  \providecommand{\examplename}{Example}
  \providecommand{\lemmaname}{Lemma}
  \providecommand{\propositionname}{Proposition}
  \providecommand{\remarkname}{Remark}
\providecommand{\theoremname}{Theorem}

\begin{document}

\title{A Chen-Fliess Approximation for diffusion functionals}

\author{Christian Litterer}

\author{Harald Oberhauser}

\address{TU Berlin, Fakultät II, Institut für Mathematik, MA 7-2, Strasse
des 17. Juni 136, 10623 Berlin.}

\email{h.oberhauser@gmail.com}

\keywords{Chen-Fliess series, stochastic differential equations, stochastic
Taylor expansion}
\begin{abstract}
We show that an interesting class of functionals of stochastic differential
equations can be approximated by a Chen-Fliess series of iterated
stochastic integrals and give a $L^{2}$ error estimate, thus generalizing
the standard stochastic Taylor expansion. The coefficients in this
series are given a very intuitive meaning by using functional derivatives,
recently introduced by B. Dupire.
\end{abstract}
\maketitle

\section{Introduction}

The expansion of the SDE solution 
\begin{equation}
dY_{t}=\sum_{i=1}^{d}V_{i}\left(Y_{t}\right)\circ dB_{t}^{i},\, Y\left(0\right)\in\mathbb{R}^{e}\label{eq:SDE-1}
\end{equation}
in a stochastic (Stratonovich) Taylor series 
\begin{equation}
f\left(Y_{t}\right)-f\left(Y_{s}\right)=\sum_{k=1}^{N}\sum_{\left(i_{1},\ldots,i_{k}\right)\in\left\{ 1,\ldots,d\right\} ^{k}}V_{i_{1}}\cdots V_{i_{k}}f\left(Y_{s}\right)\int_{\Delta^{k}\left(s,t\right)}\circ dB^{\left(i_{1},\ldots,i_{k}\right)}+R_{N}\left(s,t\right)\label{eq:stoch taylor}
\end{equation}
is an important tool in the numerical analysis of stochastic differential
equations, cf. \cite{kloeden-platen-92}. In this short note we consider
a Stratonovich stochastic Taylor expansion in the spirit of Kloeden
and Platen \cite{kloeden-platen-92} and generalize the classic formula
(\ref{eq:stoch taylor}) that applies to smooth functions  $f:\mathbb{R}^{e}\rightarrow\mathbb{R}$
by giving meaning to such an approximation when $f$ is replaced by
a sufficiently regular, nonanticipative functional 
\[
F:\left[0,T\right]\times C\left(\left[0,T\right],\mathbb{R}^{e}\right)\rightarrow\mathbb{R}
\]
and derive a $L^{2}$ bound for the remainder of the expansion.  The
crucial tool in obtaining these results are ideas of B. Dupire \cite{Duprire_FIto}
on derivatives of nonanticipative functionals (cf.~the extensions
of Cont\&Fournie \cite{ContFournie,ContFournie_Pathwise} and the
work on pathdependent viscosity PDEs \cite{2011arXiv1106.1144P,2011arXiv1108.4317P,2011arXiv1109.5971E}).
Dupire showed that one can define a time derivative $\partial_{t}F$
and a space derivative $\nabla F$ of $F$ such that one arrives at
the approximation (for simplicity assume $e=1$) 
\[
dF\approx\partial_{t}Fdt+\nabla FdB_{t}+\frac{\nabla^{2}F}{2}\left(dB_{t}\right)^{2}.
\]
The intuition behind these derivatives is that the time derivative
measures the infinitesimal drift of $F$ whereas $\nabla F$ measures
the response of $F$ to instantaneous changes in the underlying path.
In section \ref{sec:Approximating-Functionals-of} we show that these
derivatives allow to give proper meaning to the expression $V_{i_{1}}\cdots V_{i_{k}}F\left(s,Y\right)$
if one interprets a vector field in a corresponding way as a derivation
in terms of these functional derivatives (e.g. if $F\left(t,x\right)=f\left(x_{t}\right)$
this simply coincides with the standard notion of $V_{i_{1}}\cdots V_{i_{k}}$
as a differential operator). This subsequently allows us to adapt
the standard estimates for (\ref{eq:stoch taylor}) in the functional
setting.

The functional perspective of the stochastic Taylor expansion suggests
some interesting connections to problems studied in deterministic,
classic control theory: many systems are modeled as a nonlinear, causal
response to a $d$-dimensional input $\left(u^{1},\ldots,u^{d}\right)\in C\left(\left[0,T\right],\mathbb{R}^{d}\right)$
and control theory provides a powerful toolbox to deal with functionals
of the form 
\[
F:\left[0,T\right]\times C\left(\left[0,T\right],\mathbb{R}^{d}\right)\rightarrow\mathbb{R}
\]
which are nonanticipative in the sense that $F\left(t,u^{1},\ldots,u^{d}\right)$
depends only on values of $\left(u^{1},\ldots,u^{d}\right)$ up to
time $t$ and depend continuously on the input $\left(u^{1},\ldots,u^{d}\right)$.
The basic example being the solution map of a controlled differential
equation 
\begin{equation}
\frac{dy}{dt}\left(t\right)=\sum_{i=1}^{d}V_{i}\left(y\left(t\right)\right)u^{i}\left(t\right),y\left(0\right)\in\mathbb{R}\label{eq:control}
\end{equation}
i.e. $F\left(t,u^{1},\ldots,u^{d}\right)=y\left(t\right)$. We mention
- pars pro toto - the work of Brockett, Fliess and Sussmann, \cite{RogerW1976167,fliess1981fonctionnelles,sussmann1975semigroup},
which emphasized the importance of the signature of a path in the
sense of Chen \cite{chen-57,chen-58} as a central object in control
theory.  It is a well known fact to any system theorist or electrical
engineer that a functional $F$ that is linear 
\[
F\left(t,\sum_{i=1}^{d}\alpha_{i}u^{i}+\beta_{i}v^{i}\right)=\sum_{i=1}^{d}\alpha_{i}F\left(t,u^{i}\right)+\beta_{i}F\left(t,v^{i}\right)
\]
and additionally time-invariant is completely described by its response
to a Dirac impulse $\delta$ at time $0$ (clearly, the functional
given by (\ref{eq:control}) does not fit into this framework). As
mentioned above, Dupire's functional calculus rests on two derivatives
and the space derivative is, at least formally, closely related to
the well known method in control theory of response to a Dirac delta
(however in integrated form ``$\int_{0}^{t}\left(u_{r}+\epsilon\delta_{t}\right)dr=\int_{0}^{t}u_{r}dr+\epsilon,u=\frac{dB}{dt}$''
and at running time). Our interest lies in the stochastic case, but
these functional derivatives might already be of interest to the classic
control theory case for non-linear, non-time-invariant systems of
the form (\ref{eq:control}). In this context we recall in section
\ref{sec:control} a result of M. Fliess, \cite{fliess1981fonctionnelles}
on the existence of approximations to continuous, nonanticipative
functionals (this result of Fliess is the reason why we refer to the
expansion in section \ref{sec:Approximating-Functionals-of} as a
Chen-Fliess approximation instead of stochastic Taylor expansion for
functionals).

Of course, one can see the finite expansion (\ref{eq:stoch taylor})
as a small piece in the seminal work initiated by Azencott and Ben
Arous \cite{azencott-82,benarous-89} on asymptotic expansions of
SDE flows and subsequent generalizations of e.g. Castell and Hu \cite{castell,hu}
using the generalized Campbell-Baker-Hausdorff formula of Strichartz
\cite{strichartz-87}. From this point of view, the present note is
certainly only a first step but we should note that our interest for
a Taylor expansion for functionals stems from its potential use in
a cubature scheme in the sense of Kusuoka-Lyons-Victoir \cite{MR2052260}
(which we hope to address in forthcoming work) where it replaces (\ref{eq:stoch taylor})
and, similar to (\ref{eq:stoch taylor}), it might be of independent
interest. Finally, let us mention the classic results of Stroock,
\cite{stroock}, who considered a Taylor expansion of functionals
in the spirit of Malliavin's calculus of variations. However, our
results are different and tailored to the study of the smaller class
of continuous (in uniform norm) and nonanticipative functionals. In
fact, thanks to the extended functional Ito formula of Cont\&Fournie,
it is easy to relax the continuity assumption to include e.g.~functionals
of the quadratic variation (the same arguments then apply since the
domain of the functional changes but only perturbations of the path
are relevant, cf. \cite[theorem 4.1]{ContFournie}). Nevertheless,
we focus our presentation on smooth and continuous functionals since
they already cover interesting applications like pathdependent financial
derivatives, cf.~\cite{Duprire_FIto}, and keep the (necessary) new
notation to a minimum, allowing for a concise presentation of the
present approach.

\section{\label{sec:control}Approximating Functionals of bounded variation
paths}

In this section we adapt some techniques from control theory, introduce
notation and recall a classical existence theorem which demonstrates
that continuous functionals on continuous, bounded variation paths
can be uniformly approximated by linear combinations of iterated integrals,
the (truncated) Chen-Fliess series. In section \ref{sec:Approximating-Functionals-of}
we then extend this result to diffusion functionals using a new approach
with Dupire's functional derivatives. Set 
\[
\Lambda_{1-var}=\left[0,T\right]\times C^{1-var}\left(\left[0,T\right],\mathbb{R}^{d}\right)
\]
with $C^{1-var}\left(\left[0,T\right],\mathbb{R}^{d}\right)$ being
the space of continuous paths $b:\left[0,T\right]\mapsto\mathbb{R}^{d}$
which are of bounded variation, 
\[
\left|b\right|_{1-var;\left[0,T\right]}=\sup_{\mathcal{D}}\sum_{\mathcal{D}}\left|b\left(t_{i}\right)-b\left(t_{i-1}\right)\right|<\infty
\]
with the $\sup_{\mathcal{D}}$ taken over all finite dissections of
$\left[0,T\right]$. We are interested in functionals
\[
F:\Lambda_{1-var}\rightarrow\mathbb{R}
\]
which are nonanticipative in the sense that $F\left(t,b\right)$ depends
only on $\left\{ b\left(r\right),r\leq t\right\} $. A basic example
is the solution map 
\[
F\left(t,b^{1},\ldots,b^{d}\right)=y\left(t\right)
\]
mapping the ``input'' $b=\left(b^{1},\ldots,b^{d}\right)$ to the
real valued solution $y$ of the integral equation 
\begin{eqnarray*}
y\left(t\right) & = & y\left(0\right)+\int_{0}^{t}V\left(y_{r}\right)db_{r}.\\
 & = & y\left(0\right)+\sum_{i=1}^{d}\int_{0}^{t}V_{i}\left(y_{r}\right)db_{r}^{i}
\end{eqnarray*}
Such a map is continuous (provided the vector fields are bounded,
Lipschitz) if $\left[0,T\right]\times C^{1-var}\left(\left[0,T\right],\mathbb{R}^{d}\right)$
is equipped with the product topology. However, for later purposes
it will be convenient to work with the weaker topology on $\Lambda_{1-var}$
induced by the metric
\[
\rho_{1-var}\left(\left(t,b\right),\left(s,e\right)\right)=\left|t-s\right|+\left|a_{t}b-a_{s}e\right|_{1-var;\left[0,T\right]}
\]
where $a_{t}$ denotes the stopping operator 
\begin{eqnarray*}
a_{t}:C^{1-var}\left(\left[0,T\right],\mathbb{R}^{d}\right) & \rightarrow & C^{1-var}\left(\left[0,T\right],\mathbb{R}^{d}\right)\\
\left(a_{t}x\right)\left(r\right) & = & \begin{cases}
x\left(r\right), & r\leq t\\
x\left(t\right), & r>t.
\end{cases}
\end{eqnarray*}
The proposition below then follows from standard arguments.
\begin{prop}
The space $\left(\Lambda_{1-var},\rho_{1-var}\right)$ is a complete
metric space and the topology on $\Lambda_{1-var}$ given by the metric
$\rho_{1-var}$ is strictly weaker than the topology induced by the
product topology (of $\left[0,T\right]\times C^{1-var}\left(\left[0,T\right],\mathbb{R}^{d}\right)$,
the latter equipped with $\left|.\right|_{1-var}$ convergence). 
\end{prop}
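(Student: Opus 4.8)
The plan is to verify the metric axioms, prove completeness, and then compare the two topologies, the organising observation being that $\rho_{1-var}\left(\left(t,b\right),\left(s,e\right)\right)$ depends on $b$ and $e$ only through the stopped paths $a_{t}b$ and $a_{s}e$. Symmetry and non-negativity are immediate, and the triangle inequality follows at once from the corresponding inequalities for $\left|\cdot\right|$ on $\mathbb{R}$ and for the $1$-variation seminorm, together with linearity of the stopping operator, $a_{t}\left(b-e\right)=a_{t}b-a_{t}e$. For the identity of indiscernibles one notes that $\rho_{1-var}=0$ forces $t=s$ and makes $a_{t}b-a_{t}e$ constant; adopting the usual convention that all paths start at a common point (so $\left|\cdot\right|_{1-var}$ separates paths up to their value at $0$), this gives $a_{t}b=a_{t}e$, i.e. $b$ and $e$ agree on $\left[0,t\right]$. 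This is exactly the stopped-path identification underlying nonanticipative functionals, under which $\rho_{1-var}$ becomes a genuine metric; the same insensitivity to the values on $\left(t,T\right]$ will reappear below as the source of strict weakness.

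Next, completeness. Given a $\rho_{1-var}$-Cauchy sequence $\left(t_{n},b_{n}\right)$, the reals $t_{n}$ are Cauchy, so $t_{n}\to t^{*}\in\left[0,T\right]$, while the stopped paths $x_{n}:=a_{t_{n}}b_{n}$ are Cauchy in $1$-variation. First I would record that $C^{1-var}$ is complete under $\left|\cdot\right|_{1-var}$: since increments are dominated by the variation and the starting point is fixed, $1$-variation Cauchyness dominates uniform Cauchyness, so $x_{n}\to x^{*}$ uniformly with $x^{*}$ continuous, and lower semicontinuity of the total variation yields $x^{*}\in C^{1-var}$ and $\left|x_{n}-x^{*}\right|_{1-var}\to0$. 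It then remains to check that $x^{*}$ is itself stopped at $t^{*}$: for any $r>t^{*}$ one has $t_{n}<r$ eventually, whence $x_{n}\left(r\right)=x_{n}\left(t_{n}\right)\to x^{*}\left(t^{*}\right)$, so $x^{*}$ is constant on $\left[t^{*},T\right]$ and $a_{t^{*}}x^{*}=x^{*}$. Consequently $\rho_{1-var}\left(\left(t_{n},b_{n}\right),\left(t^{*},x^{*}\right)\right)=\left|t_{n}-t^{*}\right|+\left|x_{n}-x^{*}\right|_{1-var}\to0$.

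For the topological comparison, the inclusion of the $\rho_{1-var}$-topology in the product topology amounts to showing that product convergence implies $\rho_{1-var}$ convergence. If $t_{n}\to t$ and $\left|b_{n}-b\right|_{1-var}\to0$, I would split
\[
\left|a_{t_{n}}b_{n}-a_{t}b\right|_{1-var}\leq\left|a_{t_{n}}\left(b_{n}-b\right)\right|_{1-var}+\left|a_{t_{n}}b-a_{t}b\right|_{1-var};
\]
the first term is bounded by $\left|b_{n}-b\right|_{1-var}$, since stopping does not increase variation, and the second equals $\left|V\left(t_{n}\right)-V\left(t\right)\right|$ with $V\left(s\right)=\left|b\right|_{1-var;\left[0,s\right]}$, which tends to $0$ because the variation function of a continuous bounded-variation path is continuous. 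Hence the $\rho_{1-var}$-topology is weaker. To see it is strictly weaker, fix $t<T$ and let $b,b'\in C^{1-var}$ agree on $\left[0,t\right]$ but differ on $\left(t,T\right]$; then $\left(t,b\right)$ and $\left(t,b'\right)$ are distinct points of $\Lambda_{1-var}$ with $\rho_{1-var}\left(\left(t,b\right),\left(t,b'\right)\right)=0$, so no $\rho_{1-var}$-ball separates them, whereas the product topology, being Hausdorff, does. Combined with the inclusion, this shows the product topology is strictly finer.

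The step I expect to be the main obstacle is completeness, specifically the coordination of the moving stopping times $t_{n}\to t^{*}$ with $1$-variation convergence: one must simultaneously secure a continuous bounded-variation limit $x^{*}$ (via lower semicontinuity of the variation and the domination of the uniform norm by the $1$-variation norm for paths with a common starting point) and verify that this limit is genuinely stopped at the limit time $t^{*}$. The continuity of $s\mapsto a_{s}b$ in $1$-variation, needed for the weaker-topology direction, is the other delicate point; it rests on continuity of the variation function $V$, hence on continuity of the underlying path, and would fail at jump times for merely c\`adl\`ag paths.
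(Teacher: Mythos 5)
The paper itself gives no proof of this proposition (it is stated as following ``from standard arguments''), so your attempt has to be judged on its own merits rather than against the authors' argument. Your individual computations are sound: the triangle inequality; the completeness argument, including the verification that the limit path is stopped at $t^{*}$ and the appeals to lower semicontinuity of the variation and to the domination of the uniform norm by the $1$-variation seminorm for paths with a common starting point; and the weaker-topology direction via the split $\left|a_{t_{n}}b_{n}-a_{t}b\right|_{1-var}\leq\left|a_{t_{n}}\left(b_{n}-b\right)\right|_{1-var}+\left|a_{t_{n}}b-a_{t}b\right|_{1-var}$ together with continuity of the variation function of a continuous bounded variation path.

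The genuine flaw is structural: your proof of ``metric'' and your proof of ``strictly weaker'' are carried out on two different spaces, and no single space supports both claims. To obtain identity of indiscernibles you invoke the stopped-path identification $\left(t,b\right)\sim\left(t,a_{t}b\right)$; but your strictness argument exhibits a pair $\left(t,b\right)\neq\left(t,b'\right)$ at $\rho_{1-var}$-distance zero, i.e.\ a pair that is distinct only if you refuse that identification. Worse, once the identification is made, strictness becomes false: on stopped-path representatives ($b=a_{t}b$, $e=a_{s}e$) one has $\rho_{1-var}\left(\left(t,b\right),\left(s,e\right)\right)=\left|t-s\right|+\left|b-e\right|_{1-var}$, which is exactly the product metric, so the two topologies coincide on the quotient. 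What your computations actually establish is the consistent version of the statement: on the full product space $\left[0,T\right]\times C^{1-var}$, $\rho_{1-var}$ is a \emph{complete pseudometric} whose topology is strictly weaker than the product topology (the failure of the Hausdorff property being precisely what forces $\rho_{1-var}$-continuous functionals to be nonanticipative), and it becomes a genuine metric only after passing to the quotient, where the distinction between the two topologies disappears. A correct write-up must commit to one reading --- most naturally the pseudometric one, flagging the abuse of language in ``metric'' --- rather than using the identification in one paragraph and discarding it in the next. A smaller conflation in the same vein: the common-starting-point convention cures the degeneracy of the seminorm $\left|\cdot\right|_{1-var}$ on constants, but has nothing to do with the stopping degeneracy, and without it the ``product topology'' is itself only pseudometric (though your particular pair $\left(t,b\right),\left(t,b'\right)$ is still separated, since $b-b'$ is non-constant).
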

We will in the following (unless stated otherwise) always work in
the topology on $\Lambda_{1-var}$ induced by $\rho_{1-var}$. Set
\begin{eqnarray*}
C\left(\Lambda_{1-var}\right) & = & \left\{ F:\Lambda\rightarrow\mathbb{R}\lvert F\text{ is nonanticipative and continuous wrt }\rho_{1-var}\right\} .
\end{eqnarray*}

\begin{defn}
If $b\in C^{1-var}\left(\left[0,T\right],\mathbb{R}^{d}\right)$,
define for every word $\left(i_{1},\ldots,i_{k}\right)\in\left\{ 0,1,\ldots,d\right\} ^{k}$,
and $s,t\in\left[0,T\right]$,$s\leq t$, the iterated integral $\int_{\Delta^{k}\left(s,t\right)}db^{\left(i_{1},\ldots,i_{k}\right)}$
recursively as 
\begin{eqnarray*}
\int_{\Delta^{1}\left(s,t\right)}db^{\left(0\right)} & = & t-s\text{ and }\int_{\Delta^{1}\left(s,t\right)}db^{\left(i\right)}=b^{i}\left(t\right)-b^{i}\left(s\right)\text{ if }i\neq0\\
\int_{\Delta^{k+1}\left(s,t\right)}db^{\left(i_{1},\ldots,i_{k}\right)} & = & \int_{s}^{t}\int_{\Delta^{k}\left(s,r\right)}db^{\left(i_{1},\ldots,i_{k-1}\right)}db_{r}^{i_{k}}.
\end{eqnarray*}

\end{defn}
We now show that the special class of polynomial functionals are dense
in $C\left(\Lambda_{1-var}\right)$.
\begin{defn}
Call a functional $P:\Lambda_{1-var}\rightarrow\mathbb{R}$ a polynomial
functional if there exists a $N\in\mathbb{N}$ and $\left(p_{I}\right)_{I=\left(i_{1},\ldots,i_{k}\right)}\subset\mathbb{R}$
s.t. 
\[
P\left(t,b^{1},\ldots,b^{d}\right)=\sum_{k=1}^{N}\sum_{I\in\left\{ 0,\ldots,d\right\} ^{k}}p_{I}\int_{\Delta^{k}\left(0,t\right)}db^{I}.
\]
\end{defn}
\begin{rem}
\label{rm: algebra}The term polynomial is made more rigorous by using
a morphism with the non commutative algebra $\mathbb{R}\left[X\right]$
over a finite alphabet $X=\{x_{0},\ldots,x_{d}\}$, cf. \cite{fliess1981fonctionnelles}.\end{rem}
\begin{prop}
Every polynomial $P:\Lambda_{1-var}\rightarrow\mathbb{R}$ is continuous
with respect to $\rho_{1-var}$, i.e. an element of $C\left(\Lambda_{1-var}\right)$.
Further, the set of polynomials is a subalgebra of $C\left(\Lambda_{1-var}\right)$.
\end{prop}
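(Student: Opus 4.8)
The plan is to prove the two assertions separately, treating continuity first and the algebra structure second. Throughout, write $|I|=k$ for the length of a word $I=(i_1,\ldots,i_k)\in\{0,\ldots,d\}^k$ and abbreviate the iterated integral as $\int_{\Delta^{|I|}(0,t)}db^I$. Since a polynomial $P$ is by definition a finite linear combination of such integrals and $C(\Lambda_{1-var})$ is a vector space, it suffices to show that each single map $(t,b)\mapsto\int_{\Delta^{|I|}(0,t)}db^I$ is nonanticipative and $\rho_{1-var}$-continuous. Nonanticipativity is immediate from the recursive definition: the integral only involves the values $\{b(r):r\le t\}$, which coincide with the values of the stopped path $a_tb$. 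Consequently the integral is unchanged if $b$ is replaced by $a_tb$, which is exactly the quantity controlled by $\rho_{1-var}$.

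For continuity, fix $(t,b)$ and a competitor $(s,e)$ with, without loss of generality, $s\le t$; set $\tilde b=a_tb$ and $\tilde e=a_se$, so that $\rho_{1-var}((t,b),(s,e))=|t-s|+|\tilde b-\tilde e|_{1-var;[0,T]}$ and the integrals depend on $b,e$ only through $\tilde b,\tilde e$. I would split the increment into a path part and a time part,
\[
\int_{\Delta^{|I|}(0,t)}d\tilde b^I-\int_{\Delta^{|I|}(0,s)}d\tilde e^I=\left(\int_{\Delta^{|I|}(0,t)}d\tilde b^I-\int_{\Delta^{|I|}(0,t)}d\tilde e^I\right)+\left(\int_{\Delta^{|I|}(0,t)}d\tilde e^I-\int_{\Delta^{|I|}(0,s)}d\tilde e^I\right).
\]
For the first bracket (common horizon $t$, two paths) the telescoping identity $\prod x_j-\prod y_j=\sum_j(\prod_{l<j}y_l)(x_j-y_j)(\prod_{l>j}x_l)$ applied at the level of the Riemann--Stieltjes increments, together with the factorial bound $\bigl|\int_{\Delta^{|I|}(0,t)}db^I\bigr|\le(|b|_{1-var;[0,t]}+t)^{|I|}/|I|!$, yields a local Lipschitz estimate of the form $C(R)\,|\tilde b-\tilde e|_{1-var;[0,T]}$, where $C(R)$ depends only on $|I|$, on $T$, and on an a priori bound $R$ on the $1$-variation. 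For the second bracket I would use that $\tilde e=a_se$ is \emph{constant} on $[s,T]$: hence every increment $d\tilde e^i$ with $i\ne0$ vanishes on $[s,t]$, so the only contributions to the difference come from occurrences of the letter $0$, each integrated against $dr$ over a subinterval of $[s,t]$ and therefore bounded by $C(R)\,|t-s|$. Adding the two estimates gives $|P(t,b)-P(s,e)|\le C'(R)\,\rho_{1-var}((t,b),(s,e))$ on the neighbourhood where $|\tilde e|_{1-var}\le|\tilde b|_{1-var}+1$, which establishes (local Lipschitz, hence ordinary) continuity.

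The main obstacle is the local Lipschitz estimate for the path part: one must control a difference of iterated integrals of two distinct bounded-variation paths purely in terms of their $1$-variation distance. This is precisely the continuity of the truncated signature at the bounded-variation level; the telescoping above reduces it to the elementary factorial bound, but the bookkeeping of the intermediate factors, and in particular keeping the constant uniform over a neighbourhood, is the part that requires care.

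For the algebra structure, closure under addition and scalar multiplication is immediate from the definition of a polynomial functional. Closure under multiplication follows from the shuffle identity for Riemann--Stieltjes iterated integrals over the common alphabet $\{0,1,\ldots,d\}$, the time letter $0$ (with $db^0=dr$) being treated on the same footing as the others,
\[
\left(\int_{\Delta^{|I|}(0,t)}db^I\right)\left(\int_{\Delta^{|J|}(0,t)}db^J\right)=\sum_{K\in\mathrm{Sh}(I,J)}\int_{\Delta^{|K|}(0,t)}db^K,
\]
where $\mathrm{Sh}(I,J)$ is the finite set of words obtained by interleaving $I$ and $J$ while preserving the internal order of each. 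This identity is a direct consequence of decomposing the product of two simplices into the union of the order-compatible simplices (Fubini/integration by parts), and it shows that the product of two polynomials is again a finite linear combination of iterated integrals, hence a polynomial. Combined with the continuity just proved, the polynomials form a subalgebra of $C(\Lambda_{1-var})$.
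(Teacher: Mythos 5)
Your proof is correct. For comparison: the paper states this proposition \emph{without} proof (the algebra structure is only gestured at in Remark \ref{rm: algebra} via the morphism with the non-commutative algebra $\mathbb{R}\left[X\right]$, following Fliess), so your argument simply supplies the standard reasoning the paper leaves implicit -- local Lipschitz continuity of each iterated integral in $\rho_{1-var}$ via the path/time splitting, multilinear telescoping and an a priori bound, and closure under products via the shuffle identity with time treated as the extra letter $0$. Two minor points. First, the factorial bound is more than you need: the crude multilinear estimate $\bigl|\int_{\Delta^{k}(0,t)}db^{I}\bigr|\leq\prod_{j=1}^{k}\bigl(|b|_{1-var;[0,t]}+t\bigr)$ already yields the local Lipschitz constant, since after telescoping one factor in each term is exactly $|\tilde{b}-\tilde{e}|_{1-var}$. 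Second, in the shuffle identity $\mathrm{Sh}(I,J)$ must be understood as the \emph{multiset} of interleavings, i.e.~shuffles counted with multiplicity: as a set the formula fails already for $I=J=(1)$, where $\bigl(\int_{0}^{t}db^{1}\bigr)^{2}=2\int_{\Delta^{2}(0,t)}db^{(1,1)}$. This does not affect your conclusion, since all that is needed for the subalgebra claim is that the product is again a finite linear combination of iterated integrals, which the shuffle expansion (with integer multiplicities) provides.
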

The theorem below is a straightforward extension of a well-known theorem
of control theory, cf. \cite[theorem II.5]{sussmann1975semigroup,fliess1981fonctionnelles}
to bounded variation (but not necessarily absolutely continuous) paths
in the topology $\rho_{1-var}$. Note that it relies on the Stone-Weierstrass
theorem. In section \ref{sec:Approximating-Functionals-of} we show
that under stronger assumptions on the functional (and in a semimartingale
context) one can choose the coefficients as functional derivatives
in the sense of Dupire (this is similar to the relation of the classic
Taylor expansion, applicable to smooth functions, to the Weierstrass
approximation theorem which applies to continuous functions).
\begin{thm}
\label{thm:stone-weierst}Let $\mathcal{K}$ be a compact subset of
$C^{1-var}\left(\left[0,T\right],\mathbb{R}^{d}\right)$ and 
\[
F:\left[0,T\right]\times\mathcal{K}\rightarrow\mathbb{R}
\]
be continuous with respect to $\rho_{1-var}$. Then $F$ can be arbitrarily
close approximated in uniform topology by polynomial functionals,
that is $\forall\epsilon>0$ exists a polynomial functional $P:\Lambda_{1-var}\rightarrow\mathbb{R}$
such that
\[
\sup_{\left(t,b\right)\in\left[0,T\right]\times\mathcal{K}}\left|F\left(t,b\right)-P\left(t,b\right)\right|<\epsilon.
\]
\end{thm}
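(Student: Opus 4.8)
The plan is to apply the Stone--Weierstrass theorem, so I would structure the proof around verifying its hypotheses for the algebra of polynomial functionals restricted to the compact set $\left[0,T\right]\times\mathcal{K}$.

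First I would set $A$ to be the set of restrictions to $\left[0,T\right]\times\mathcal{K}$ of polynomial functionals $P:\Lambda_{1-var}\rightarrow\mathbb{R}$. By the preceding proposition each such $P$ is continuous with respect to $\rho_{1-var}$, and the set of polynomials is a subalgebra of $C\left(\Lambda_{1-var}\right)$; hence $A$ is a subalgebra of $C\left(\left[0,T\right]\times\mathcal{K}\right)$, the space of real-valued functions continuous in the $\rho_{1-var}$ topology. Since $\left[0,T\right]\times\mathcal{K}$ is compact (as $\mathcal{K}$ is compact in $C^{1-var}$ and $\left[0,T\right]$ is compact, and since the $\rho_{1-var}$ topology is weaker than the product topology the set remains compact in the coarser topology), $C\left(\left[0,T\right]\times\mathcal{K}\right)$ with the uniform norm is the target Banach space in which I want $A$ to be dense.

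The two Stone--Weierstrass conditions I must check are that $A$ contains the constants and that $A$ separates points of $\left[0,T\right]\times\mathcal{K}$. For the constants, the single-letter word $\left(0\right)$ gives $\int_{\Delta^{1}\left(0,t\right)}db^{\left(0\right)}=t$, so $A$ contains the coordinate function $\left(t,b\right)\mapsto t$; to obtain genuine nonzero constants one notes that the subalgebra generated by $t$ together with the shuffle identity produces all powers of $t$, and one adjoins constants either by observing the standard convention that the empty word contributes $1$, or equivalently by enlarging $A$ to $A+\mathbb{R}\cdot 1$, which remains a subalgebra and changes nothing in the approximation argument. For separation I must show: given $\left(s,b\right)\neq\left(t,e\right)$ in $\left[0,T\right]\times\mathcal{K}$, there is a polynomial functional taking different values at the two points.

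The separation step is the main obstacle and the heart of the proof. If $s\neq t$ the time coordinate $\int_{\Delta^{1}\left(\cdot\right)}db^{\left(0\right)}$ already separates, so the delicate case is $s=t$ with $a_{t}b\neq a_{t}e$ as elements of $C^{1-var}$. Here I would invoke the fundamental injectivity property of iterated integrals: the family $\left\{\int_{\Delta^{k}\left(0,t\right)}db^{I}: k\in\mathbb{N},\ I\in\left\{0,\ldots,d\right\}^{k}\right\}$ is precisely the signature of the time-extended path $r\mapsto\left(r,b\left(r\right)\right)$ on $\left[0,t\right]$, and by the uniqueness-of-signature principle (the classical injectivity for bounded-variation paths, cf. Chen), two paths with distinct stopped trajectories $a_{t}b\neq a_{t}e$ must differ in at least one iterated integral of some level $k$. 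Adjoining the time component guarantees the paths are strictly increasing in the zeroth coordinate, which rules out tree-like cancellation and makes the signature map injective on the relevant class. Thus some word $I$ yields $\int_{\Delta^{k}\left(0,t\right)}db^{I}\neq\int_{\Delta^{k}\left(0,t\right)}de^{I}$, and the associated monomial polynomial functional separates the two points. With both Stone--Weierstrass hypotheses established, the theorem gives uniform density of $A$, which is exactly the claimed approximation: for every $\epsilon>0$ there is a polynomial functional $P$ with $\sup_{\left(t,b\right)\in\left[0,T\right]\times\mathcal{K}}\left|F\left(t,b\right)-P\left(t,b\right)\right|<\epsilon$.
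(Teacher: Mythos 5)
Your proposal follows the same overall strategy as the paper: verify compactness of $\left[0,T\right]\times\mathcal{K}$ in the $\rho_{1-var}$ topology, check that the polynomial functionals form a subalgebra containing the constants, and apply Stone--Weierstrass, so that everything hinges on point separation. Your treatment of compactness, constants and the case $s\neq t$ is fine, indeed more careful than the paper's (the paper's separating functional ``$P\left(t,b\right)=1$'' for $s\neq t$ is evidently a typo for the time coordinate $\int_{\Delta^{1}\left(0,t\right)}db^{\left(0\right)}=t$, which is what you use). Where you genuinely diverge is the case $s=t$: you invoke uniqueness of the signature, while the paper gives an elementary, self-contained argument. It uses only words of the form $I=\bigl(0,\ldots,0,i\bigr)$ with $k$ zeros, whose iterated integrals are linear in the path, so that the difference of values at $\left(t,b\right)$ and $\left(t,e\right)$ equals $\int_{0}^{t}\frac{r^{k}}{k!}\,d\left(b^{i}-e^{i}\right)_{r}$, i.e.\ the $k$-th moment of the signed measure $\mu=d\left(b^{i}-e^{i}\right)$; if all these vanish, Weierstrass approximation on $C\left(\left[0,t\right],\mathbb{R}\right)$ forces $\mu=0$, a contradiction. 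This buys self-containedness: your route imports a much heavier theorem --- note that Chen's classical injectivity result concerns (irreducible) piecewise regular paths, and for general continuous bounded-variation paths the tree-like/uniqueness statement you allude to is the Hambly--Lyons theorem --- whereas the paper needs nothing beyond scalar Weierstrass approximation.

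There is also one genuine imprecision in your separation claim. Iterated integrals depend only on increments, hence are invariant under translation of the path: if $e=b+c$ for a nonzero constant vector $c$, then every polynomial functional takes the same value at $\left(t,b\right)$ and $\left(t,e\right)$, although $a_{t}b\neq a_{t}e$. So your statement that $a_{t}b\neq a_{t}e$ forces some iterated integral to differ is false as written; signature uniqueness holds only up to translation (and reparametrization). The gap is harmless, but only for a reason you need to make explicit: the $1$-variation seminorm also vanishes on constants, so $\rho_{1-var}\left(\left(t,b\right),\left(t,e\right)\right)=0$ for such a pair, meaning these are not distinct points of the (quotient) compact Hausdorff space on which Stone--Weierstrass is applied, and no $\rho_{1-var}$-continuous $F$ can distinguish them either. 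The separation statement you actually need, and the one your signature argument (or the paper's moment argument) genuinely proves, is: if $a_{t}b-a_{t}e$ is non-constant on $\left[0,t\right]$, then some iterated integral up to time $t$ differs.
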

\begin{proof}
The set $\left[0,T\right]\times\mathcal{K}$ is a compact subset of
$\left[0,T\right]\times C^{1-var}\left(\left[0,T\right],\mathbb{R}^{d}\right)$
if the latter set is equipped with the product topology of Euclidean
distance on $\left[0,T\right]$ and $\left|.\right|_{1-var}$. To
see that it is also a compact subset in the topology given by $\rho_{1-var}$,
take a sequence 
\[
\left(t^{n},x^{n}\right)\subset\left[0,T\right]\times\mathcal{K}
\]
By compactness there exists a $\left(t,x\right)\in\left[0,T\right]\times\mathcal{K}$
and a sub-sequence $\left(n_{k}\right)_{k}$ s.t. $\left|t^{n_{k}}-t\right|\rightarrow0$
and $\left|x^{n_{k}}-x\right|_{1-var;\left[0,T\right]}\rightarrow_{k}0$.
Therefore also
\[
\rho\left(t^{n_{k}},x^{n_{k}}\right)=\left|t^{n_{k}}-t\right|+\left|a_{t^{n_{k}}}x_{r}^{n_{k}}-a_{t}x_{r}\right|_{1-var;\left[0,T\right]}\rightarrow_{k}0,
\]
hence $\left[0,T\right]\times\mathcal{K}$ is under the $\rho_{1-var}$
metric a compact subset of $\Lambda_{1-var}$. We now use the Stone-Weierstrass
theorem: $\left(\left[0,T\right]\times\mathcal{K},\rho_{1-var}\right)$
is a compact, Hausdorff space and from proposition we know that the
space of polynomials is a subalgebra of $C\left(\Lambda_{1-var}\right)$,
hence (strictly speaking the space polynomials with domains restricted
to $\left[0,T\right]\times\mathcal{K}$ are) also a subalgebra of
$C\left(\left[0,T\right]\times\mathcal{K}\right)$. Further, the space
of polynomials contains the nonzero, constant functions and it remains
to verify that it separates points. Given two elements of $\left[0,T\right]\times\mathcal{K}$,
\[
\left(t,b^{1},\ldots,b^{d}\right)\neq\left(s,e^{1},\ldots,e^{d}\right),
\]
we can choose a polynomial $P$ s.t. $P\left(t,b^{1},\ldots,b^{d}\right)\neq P\left(s,e^{1},\ldots,e^{d}\right)$:
\begin{itemize}
\item if $s\neq t$: take the functional $P\left(t,b\right)=1$,
\item if $s=t$ and $b^{i}\neq e^{i}$ for some $i\in\left\{ 1,\ldots,d\right\} $:
consider the polynomials
\[
P\left(t,b^{1},\ldots,b^{d}\right)=\int_{\Delta^{k}\left(0,t\right)}db^{I}.
\]
with $I=\bigl(\underbrace{0,\ldots,0}_{k},i\bigr)$ for $k\geq0$.
Then $P\left(t,b-e\right)=\int_{0}^{t}\frac{r^{k}}{k!}d\left(b-e\right)_{r}^{i}.$
Recall that a continuous path $x:\left[0,t\right]\rightarrow\mathbb{R}$
is of finite variation if and only if $x\left(.\right)=\int_{0}^{.}\mu\left(dr\right)$
for a signed measure $\mu$ on $\left[0,t\right]$ with finite mass
and no atoms. The signed measure $\mu$ defines a linear functional
on the space $C\left(\left[0,t\right],\mathbb{R}\right)$. Hence,
if $\int_{0}^{t}p\left(r\right)\mu\left(dr\right)=0$ for all polynomials
$p:\left[0,t\right]\mapsto\mathbb{R}$ this would imply $\int_{\left[0,t\right]}f\left(r\right)\mu\left(dr\right)=0$
$\forall f\in C\left(\left[0,t\right],\mathbb{R}\right)$ by the Stone
Weierstrass theorem on $C\left(\left[0,T\right],\mathbb{R}\right)$,
therefore also $x\left(s\right)=\int_{\left[0,s\right]}\mu\left(dr\right)=0$,
$\forall s\in\left[0,t\right]$. Applying this reasoning to $x=b^{i}-e^{i}$
shows that $P\left(t,b-e\right)=\int_{0}^{t}\frac{r^{k}}{k!}d\left(b-e\right)_{r}^{i}=0$
for every $k\geq0$ leads to a contradiction to $b^{i}\neq e^{i}$.
\end{itemize}
\end{proof}
\begin{rem}
An important generalization of theorem \ref{thm:stone-weierst} to
rough path functionals has been obtained by Lyons%
\footnote{T. Lyons personal communication (cf. thesis of T. Fawcett \cite{Fawcettthesis}
and the transfer report of A. Janssen).%
} et al. \cite{Fawcettthesis} 
\end{rem}

\section{\label{sec:Approximating-Functionals-of}Approximating Functionals
of a diffusion}

Theorem \ref{thm:stone-weierst} applies to bounded variation paths
and gives no description of the approximating functionals. We now
follow a different approach using Dupire's functional derivatives
to show that under some additional smoothness assumption on $F$ one
can also approximate functionals of semi-martingales trajectories
and give an explicit description of the coefficients in the expansion.
Let us fix notation: We are interested in functionals of the form
\[
F:\left[0,T\right]\times D\left(\left[0,T\right],\mathbb{R}^{e}\right)\rightarrow\mathbb{R},
\]
$D\left(\left[0,T\right],\mathbb{R}^{e}\right)$ denoting the space
of càdlàg paths on $\left[0,T\right]$ with values in $\mathbb{R}^{e}$
and we write $\Lambda=\left[0,T\right]\times D\left(\left[0,T\right],\mathbb{R}^{e}\right)$.
Define a metric $\rho_{\infty}$ on $\Lambda$,
\[
\rho_{\infty}\left(\left(t,x\right),\left(s,y\right)\right)=\left|t-s\right|+\left|a_{t}x-a_{s}y\right|_{\infty;\left[0,T\right]}
\]
($a_{t}$ denotes as in section \ref{sec:control} the stopping operator
now applied to cadlag paths) and set 
\[
C\left(\Lambda\right)=\left\{ F:\Lambda\rightarrow\mathbb{R},\text{ progressive measureable wrt to \ensuremath{\mathcal{F}}, continuous wrt to }\rho_{\infty}\right\} ,
\]
$\mathcal{F}$ being the $\sigma$-algebra generated by the coordinate
process on $D\left(\left[0,T\right],\mathbb{R}^{e}\right)$. Following
Dupire \cite{Duprire_FIto}, we say that $F\in C\left(\Lambda\right)$
has a time derivative $\partial_{0}F\left(t,x\right)$ at $\left(t,x\right)\in\Lambda$
if the limit 
\[
\partial_{0}F\left(t,x\right)=\lim_{\epsilon\downarrow0}\frac{1}{\epsilon}\left(F\left(t+\epsilon,a_{t}x\right)-F\left(t,x\right)\right)
\]
exists and a space derivative $\partial_{i}F\left(t,x\right)$ in
direction $e_{i},i=1,\ldots,e,$ ($e_{i}$ denoting the standard basis
on $\mathbb{R}^{e}$) at $\left(t,x\right)\in\Lambda$ if the limit
\[
\partial_{i}F\left(t,x\right)=\lim_{\epsilon\rightarrow0}\frac{1}{\epsilon}\left(F\left(t,x+\epsilon e_{i}1_{\cdot\geq t}\right)-F\left(t,x\right)\right)
\]
exists. Similarly, define the higher order derivatives $\partial_{i}\partial_{j}F$
etc. Denote $C^{m,n}\left(\Lambda\right)$ the subset of $C\left(\Lambda\right)$
of functionals which are at least $m$ times differentiable in time,
at least $n$ times differentiable in space and continuous with respect
to $\rho_{\infty}$. More formally, we define a set of multi-indices
by letting
\[
\mathcal{E}_{m,n}:=\left\{ (\beta_{1},\ldots,\beta_{i})\in\bigcup_{k=0}^{\infty}\{0,\ldots,e\}^{k}:card(j:\beta_{j}=0)\leq m\text{ and }card(j:\beta_{j}\neq0)\leq n\right\} 
\]
(by convention $\left\{ 0,\ldots,e\right\} ^{0}=\emptyset)$ and say
$F\in C^{m,n}\left(\Lambda\right)$ if $\partial_{\alpha_{1}}\dots\partial_{\alpha_{i}}F\in C\left(\Lambda\right)$
for all $(\alpha_{1},\ldots,\alpha_{i})\in\mathcal{E}_{m,n}$. Furthermore
we define sets of bounded functions with bounded derivatives by 
\[
C_{b}^{m,n}\left(\Lambda\right)=\left\{ F\in C^{m,n}\left(\Lambda\right):\sup_{\left(r,y\right)\in\Lambda}\left|\partial_{\alpha_{1}}\dots\partial_{\alpha_{i}}F\left(r,y\right)\right|<\infty,(\alpha_{1},\ldots,\alpha_{n})\in\mathcal{E}_{m,n}\right\} .
\]
The functional derivatives behave similar to the standard derivatives,
we make repeatedly use of two properties.
\begin{lem}
\label{lem:product}Let $m,n\in\mathbb{N}$ and suppose $F,G\in C_{b}^{m,n}\left(\Lambda\right).$ 
\begin{enumerate}
\item Then $FG\in C_{b}^{m,n}\left(\Lambda\right)$ and $\partial_{i}\left(FG\right)=\left(\partial_{i}F\right)G+F\left(\partial_{i}G\right)$
for $i\in\left\{ 0,1,\ldots,e\right\} $.
\item If $F\left(t,x\right)=f\left(t,x_{t}\right)$ for $f\in C^{1,1}\left(\left[0,T\right]\times\mathbb{R}^{e},\mathbb{R}\right)$,
then $F\in C^{1,1}\left(\Lambda\right)$ and $\left(\partial_{0}F\right)\left(t,x\right)=\frac{df}{dt}\left(t,x_{t}\right)$
and $\left(\partial_{i}F\right)\left(t,x\right)=\frac{df}{dx^{i}}\left(t,x_{t}\right)$
for $i\in\left\{ 1,\ldots,e\right\} $ 
\end{enumerate}
\end{lem}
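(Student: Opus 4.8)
The plan is to prove the two assertions of Lemma~\ref{lem:product} directly from the definitions of the functional derivatives, treating the product rule (1) and the reduction-to-classical-derivatives statement (2) separately. For part (1), the key observation is that all the functional derivatives $\partial_i$, $i\in\{0,1,\ldots,e\}$, are defined as limits of \emph{linear} difference quotients evaluated at a fixed base point $(t,x)$; once the base point is frozen, the argument is essentially the elementary proof of the Leibniz rule for a derivative of a product of real-valued functions of a real increment $\epsilon$.

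First I would handle the space directions $i\in\{1,\ldots,e\}$. Fix $(t,x)\in\Lambda$ and write $x^\epsilon=x+\epsilon e_i 1_{\cdot\geq t}$. The plan is to expand the standard product-rule telescoping identity
\[
\frac{1}{\epsilon}\bigl(F(t,x^\epsilon)G(t,x^\epsilon)-F(t,x)G(t,x)\bigr)
=\frac{F(t,x^\epsilon)-F(t,x)}{\epsilon}\,G(t,x^\epsilon)+F(t,x)\,\frac{G(t,x^\epsilon)-G(t,x)}{\epsilon}.
\]
Letting $\epsilon\to0$, the first difference quotient converges to $(\partial_i F)(t,x)$ and the second to $(\partial_i G)(t,x)$ by hypothesis, while $G(t,x^\epsilon)\to G(t,x)$ since $G\in C(\Lambda)$ is continuous with respect to $\rho_\infty$ and $\rho_\infty((t,x^\epsilon),(t,x))=|\epsilon|\,|e_i|\to0$. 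This yields the claimed identity. The time direction $i=0$ is identical except that $x^\epsilon$ is replaced by the pair $(t+\epsilon,a_t x)$ and one uses $\rho_\infty$-continuity together with $\rho_\infty((t+\epsilon,a_t x),(t,x))=|\epsilon|+|a_{t+\epsilon}a_t x-a_t x|_\infty\to0$. Iterating the first-order Leibniz rule then shows that an arbitrary derivative $\partial_{\alpha_1}\cdots\partial_{\alpha_i}F$, with $(\alpha_1,\ldots,\alpha_i)\in\mathcal{E}_{m,n}$, of the product $FG$ is a finite sum of products of derivatives of $F$ and $G$ whose multi-indices are subwords of $(\alpha_1,\ldots,\alpha_i)$ and hence still lie in $\mathcal{E}_{m,n}$; since each such factor is bounded by the $C_b^{m,n}$ assumption and continuous, the product $FG$ lies in $C_b^{m,n}(\Lambda)$.

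For part (2) the strategy is to insert the state-dependent representation $F(t,x)=f(t,x_t)$ into the two limit definitions and recognise the resulting difference quotients as classical partial difference quotients of $f$. For the space derivative, the vertical perturbation $x+\epsilon e_i 1_{\cdot\geq t}$ changes the value of the path at the current time $t$ from $x_t$ to $x_t+\epsilon e_i$, so
\[
\frac{1}{\epsilon}\bigl(F(t,x+\epsilon e_i 1_{\cdot\geq t})-F(t,x)\bigr)
=\frac{1}{\epsilon}\bigl(f(t,x_t+\epsilon e_i)-f(t,x_t)\bigr)\to\frac{\partial f}{\partial x^i}(t,x_t),
\]
using differentiability of $f$ in its spatial argument. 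For the time derivative, the horizontal extension $a_t x$ keeps the terminal value at $x_t$ while advancing time to $t+\epsilon$, giving $\frac{1}{\epsilon}(f(t+\epsilon,x_t)-f(t,x_t))\to\frac{\partial f}{\partial t}(t,x_t)$. Continuity of these derivatives with respect to $\rho_\infty$ follows because $(t,x)\mapsto(t,x_t)$ is $\rho_\infty$-to-Euclidean continuous (a $\rho_\infty$-small perturbation changes $t$ little and changes the stopped path uniformly, in particular at the endpoint) and $f\in C^{1,1}$ has continuous derivatives, whence $F\in C^{1,1}(\Lambda)$.

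The main obstacle I anticipate is not any single limit computation but the bookkeeping in part (1) needed to verify that the product genuinely remains in $C_b^{m,n}(\Lambda)$ rather than merely being once differentiable: one must argue carefully that every multi-index arising after repeated application of the Leibniz rule still satisfies the two cardinality constraints defining $\mathcal{E}_{m,n}$ (at most $m$ time-derivatives and at most $n$ space-derivatives). The clean way to do this is to note that each factor appearing in the expansion of $\partial_{\alpha_1}\cdots\partial_{\alpha_i}(FG)$ carries a multi-index that is a \emph{subsequence} of $(\alpha_1,\ldots,\alpha_i)$, so its count of zero and nonzero entries cannot exceed that of the full word; boundedness of each factor then transfers to the finite sum. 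A secondary technical point worth flagging is the interchange of the limit with continuity for the second factor $G(t,x^\epsilon)$, which is exactly where $\rho_\infty$-continuity (rather than mere measurability) of the functionals is used.
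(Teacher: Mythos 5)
Your proof is correct: the telescoping argument giving the first-order Leibniz rule (with $\rho_\infty$-continuity of $G$ along the perturbations $(t,x+\epsilon e_i 1_{\cdot\geq t})$ and $(t+\epsilon,a_t x)$ supplying convergence of the second factor), the subword bookkeeping showing every term produced by iterating the product rule carries a multi-index still in $\mathcal{E}_{m,n}$, and the reduction in part (2) of the functional difference quotients to classical ones for $f$ (using that $(t,x)\mapsto(t,x_t)$ is $\rho_\infty$-to-Euclidean continuous, since $|x_t-y_s|\leq|a_tx-a_sy|_{\infty;[0,T]}$) are exactly the intended verifications. The paper itself states this lemma without any proof, treating it as an elementary consequence of Dupire's definitions, so your write-up supplies the argument the authors omitted rather than diverging from one.
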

\begin{thm}[Dupire's functional It\={o} formula]
\label{thm:Dupire Ito}Let $F\in C_{b}^{1,2}\left(\Lambda\right)$,
$Y$ a continuous, $\mathbb{R}^{e}$-valued semi-martingale. Then
$F\left(Y\right)$ is a continuous semi-martingale and a.s. 
\[
F_{t}\left(Y\right)-F_{s}\left(Y\right)=\int_{s}^{t}\partial_{0}F_{r}\left(Y\right)dr+\sum_{i=1}^{e}\int_{s}^{t}\partial_{i}F_{r}\left(Y\right)dY_{r}^{i}+\frac{1}{2}\sum_{i,j=1}^{e}\int_{s}^{t}\partial_{ij}F_{r}\left(Y\right)d\left[Y\right]_{r}^{ij}.
\]
\end{thm}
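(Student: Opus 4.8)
The plan is to follow the strategy of Dupire and Cont--Fournié: approximate the continuous path $Y$ by piecewise frozen (step) paths, establish the identity \emph{exactly} for such step paths by a telescoping-plus-Taylor argument, and then pass to the limit using the $\rho_{\infty}$-continuity of $F$ and of its derivatives, which is guaranteed by $F\in C_{b}^{1,2}(\Lambda)$.

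First I would \emph{localize}. Since $Y$ is a continuous semimartingale it may be unbounded, so I introduce the stopping times $\tau_{m}=\inf\{r:|Y_{r}|\ge m\text{ or }|[Y]_{r}|\ge m\}\wedge T$ and prove the formula for each stopped process $Y^{\tau_{m}}$; as $\tau_{m}\uparrow T$ a.s.\ both sides converge and the general case follows. After localization $Y$ takes values in a compact set, $[Y]$ is bounded, and all derivatives occurring are bounded and uniformly continuous along the compact tube swept out by the stopped path. This also guarantees that the integrals on the right-hand side are well defined.

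Next, fix a partition $s=t_{0}<\cdots<t_{n}=t$ and replace $Y$ by the step path $Y^{\pi}$ frozen at $Y_{t_{j}}$ on each $[t_{j},t_{j+1})$. For $Y^{\pi}$ I telescope $F(t,Y^{\pi})-F(s,Y^{\pi})=\sum_{j}\bigl(F(t_{j+1},Y^{\pi})-F(t_{j},Y^{\pi})\bigr)$ and split each summand, by inserting $F(t_{j+1},a_{t_{j}}Y^{\pi})$, into a \emph{horizontal} increment (time advancing along the frozen path) and a \emph{vertical} increment (the value bumped from $Y_{t_{j}}$ to $Y_{t_{j+1}}$ at time $t_{j+1}$). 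By nonanticipativity and the definition of $\partial_{0}F$, the horizontal part equals $\int_{t_{j}}^{t_{j+1}}\partial_{0}F(u,a_{t_{j}}Y^{\pi})\,du$ via the fundamental theorem of calculus. The crucial point is that for the \emph{step} path the vertical perturbation $a_{t_{j+1}}Y^{\pi}-a_{t_{j}}Y^{\pi}$ is exactly the single bump $\Delta Y_{j}\,1_{\cdot\ge t_{j+1}}$ with $\Delta Y_{j}=Y_{t_{j+1}}-Y_{t_{j}}$, so an honest second-order Taylor expansion in the bump size applies and yields the coefficients $\partial_{i}F$ and $\partial_{ij}F$ (using Lemma \ref{lem:product} for the iterated space derivatives), evaluated at $(t_{j+1},a_{t_{j}}Y^{\pi})$, plus a remainder $R_{j}$ with $|R_{j}|\le\varepsilon(|\Delta Y_{j}|)\,|\Delta Y_{j}|^{2}$, where $\varepsilon$ is the modulus of continuity of the second derivatives.

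Finally I would pass to the limit along a refining sequence of partitions with mesh tending to zero. Since $Y$ is continuous, $Y^{\pi}\to Y$ uniformly a.s., so $(r,Y^{\pi})\to(r,Y)$ in $\rho_{\infty}$; by the assumed $\rho_{\infty}$-continuity of $F$ and of each of its derivatives, every coefficient evaluated along $Y^{\pi}$ converges to the corresponding one along $Y$, and the two endpoint terms converge to $F(t,Y)-F(s,Y)$. The horizontal sum converges to $\int_{s}^{t}\partial_{0}F_{r}(Y)\,dr$; the first-order sum $\sum_{i}\sum_{j}\partial_{i}F(t_{j+1},a_{t_{j}}Y^{\pi})\Delta Y_{j}^{i}$ converges in probability to the Itô integral $\sum_{i}\int_{s}^{t}\partial_{i}F_{r}(Y)\,dY_{r}^{i}$, because its coefficient is frozen at the left endpoint $t_{j}$ and is therefore adapted; and the second-order sum converges to $\tfrac{1}{2}\sum_{i,j}\int_{s}^{t}\partial_{ij}F_{r}(Y)\,d[Y]_{r}^{ij}$ by the defining property of quadratic covariation, whence the a.s.\ identity follows along a subsequence. \textbf{The main obstacle} is precisely this limit passage: one must simultaneously (i) control $\sum_{j}R_{j}$, handled by bounding it with $\max_{j}\varepsilon(|\Delta Y_{j}|)\cdot\sum_{j}|\Delta Y_{j}|^{2}$ and noting that $\sum_{j}|\Delta Y_{j}|^{2}$ stays bounded while the modulus factor vanishes as the mesh shrinks, and (ii) upgrade the convergence of Riemann-type sums with \emph{path-dependent, randomly varying} coefficients to convergence of the stochastic and quadratic-variation integrals, which requires the uniform continuity of the derivatives along the compact tube together with the standard semimartingale convergence theorems for left-point Riemann sums and for sums of products of increments.
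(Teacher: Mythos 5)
Your proposal is correct and is essentially the argument behind the theorem as the paper presents it: the paper itself offers no proof beyond citing Dupire and Cont--Fourni\'e, and your sketch faithfully reconstructs exactly the proof of those cited works --- localization, the horizontal/vertical decomposition of increments along a partition of a piecewise-frozen approximation (with the horizontal part handled by the fundamental theorem of calculus for $\partial_{0}F$ and the vertical part by an honest second-order Taylor expansion in the bump, which is legitimate precisely because the step path's vertical increment is a single bump $\Delta Y_{j}1_{\cdot\geq t_{j+1}}$), followed by the limit passage in which the left-point, $\mathcal{F}_{t_{j}}$-measurable coefficients yield the It\^o and quadratic-variation integrals and the a.s.\ identity is recovered along a subsequence. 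The technical points you flag (uniform continuity of the second derivatives near the compact tube, boundedness in probability of $\sum_{j}|\Delta Y_{j}|^{2}$, dominated convergence for the stochastic integrals) are exactly the ones resolved in the cited references, so there is no gap in the strategy.
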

\begin{proof}
See \cite{Duprire_FIto} and \cite{ContFournie}.\end{proof}
\begin{rem}
Above formula also holds when the boundedness is replaced by boundedness
on bounded sets and $F$ depends only continuous wrt to $Y$ and an
extra variable e.g. the quadratic variation of $Y$, cf. \cite{ContFournie}.
\end{rem}
We actually need the Stratonovich version with the Stratonovich integral
as usual defined as 
\[
\int X\circ dY:=\int XdY+\frac{1}{2}\left[X,Y\right].
\]
Above formula then reads
\begin{cor}
\label{cor:functiional strat}Let $F\in C_{b}^{1,3}\left(\Lambda\right)$,
$Y$ a continuous, $\mathbb{R}^{e}$-valued semi-martingale. Then
we have a.s.
\[
F_{t}\left(Y\right)-F_{s}\left(Y\right)=\int_{s}^{t}\partial_{0}F_{r}\left(Y\right)dr+\sum_{i=1}^{e}\int_{s}^{t}\partial_{i}F_{r}\left(Y\right)\circ dY_{r}^{i}.
\]
\end{cor}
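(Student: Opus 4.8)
The plan is to derive the Stratonovich formula from the It\^o version (Theorem \ref{thm:Dupire Ito}) by inserting the definition of the Stratonovich integral and identifying the resulting quadratic covariation with the second-order correction term appearing there. Since $C_b^{1,3}(\Lambda)\subset C_b^{1,2}(\Lambda)$, Theorem \ref{thm:Dupire Ito} applies to $F$ and gives
\[
F_t(Y)-F_s(Y)=\int_s^t\partial_0 F_r(Y)\,dr+\sum_{i=1}^e\int_s^t\partial_i F_r(Y)\,dY_r^i+\frac12\sum_{i,j=1}^e\int_s^t\partial_{ij}F_r(Y)\,d[Y]_r^{ij}.
\]
By the definition $\int X\circ dY=\int X\,dY+\frac12[X,Y]$, the corollary follows once I show that
\[
\sum_{i=1}^e\bigl[\partial_i F_\cdot(Y),Y^i\bigr]_s^t=\sum_{i,j=1}^e\int_s^t\partial_{ij}F_r(Y)\,d[Y]_r^{ij}.
\]

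First I would verify that each first-order space derivative $\partial_i F$, $i\in\{1,\ldots,e\}$, is itself a functional in $C_b^{1,2}(\Lambda)$. This is precisely where the extra order of regularity is needed: differentiating $\partial_i F$ up to one time and two space orders produces derivatives of $F$ whose multi-indices lie in $\mathcal{E}_{1,3}$, so the boundedness and continuity required for membership in $C_b^{1,2}(\Lambda)$ are inherited from $F\in C_b^{1,3}(\Lambda)$. Applying Theorem \ref{thm:Dupire Ito} to $\partial_i F$ then shows that $\partial_i F_\cdot(Y)$ is a continuous semimartingale with decomposition
\[
\partial_i F_t(Y)-\partial_i F_s(Y)=\int_s^t\partial_0\partial_i F_r(Y)\,dr+\sum_{j=1}^e\int_s^t\partial_{ij}F_r(Y)\,dY_r^j+\frac12\sum_{j,k=1}^e\int_s^t\partial_{ijk}F_r(Y)\,d[Y]_r^{jk}.
\]

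Next I would compute $[\partial_i F_\cdot(Y),Y^i]$. The two finite-variation terms (the $dr$-integral and the $d[Y]$-integral) contribute nothing to the bracket against the continuous semimartingale $Y^i$, so only the stochastic integral survives; using $[\int H\,dX,Z]=\int H\,d[X,Z]$ together with $[Y^j,Y^i]=[Y]^{ij}$ yields $[\partial_i F_\cdot(Y),Y^i]_s^t=\sum_{j=1}^e\int_s^t\partial_{ij}F_r(Y)\,d[Y]_r^{ij}$. Summing over $i$ gives the displayed identity, and substituting it back into the It\^o formula for $F$ produces the claimed expansion.

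The calculation is routine once the hypotheses are in place, so I do not expect a genuine analytic obstacle. The one point requiring care --- and the conceptual reason the statement is phrased with $C_b^{1,3}$ rather than $C_b^{1,2}$ --- is the regularity bookkeeping: the Stratonovich correction is generated by differentiating the first-order integrand $\partial_i F$ once more in space, so one must confirm that $\partial_i F\in C_b^{1,2}(\Lambda)$ before Theorem \ref{thm:Dupire Ito} may legitimately be applied to it.
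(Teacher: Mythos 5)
Your proof is correct and follows essentially the same route as the paper: reduce the claim, via the definition $\int X\circ dY=\int X\,dY+\frac{1}{2}\left[X,Y\right]$, to the bracket identity $\left[\partial_{i}F,Y^{i}\right]=\sum_{j}\int\partial_{j}\partial_{i}F_{r}\left(Y\right)d\left[Y\right]_{r}^{ji}$, then apply Theorem \ref{thm:Dupire Ito} to $\partial_{i}F\in C_{b}^{1,2}\left(\Lambda\right)$ and observe that only the stochastic-integral part of its decomposition contributes to the bracket. Your explicit multi-index bookkeeping showing $\partial_{i}F\in C_{b}^{1,2}\left(\Lambda\right)$ (via $\mathcal{E}_{1,3}$) is a detail the paper asserts without elaboration, but it is the same argument.
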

\begin{proof}
We need to show that for $i\in\left\{ 1,\ldots,e\right\} $, 
\[
\left[\partial_{i}F,Y^{i}\right]_{t}=\sum_{j=1}^{e}\int_{s}^{t}\partial_{j}\partial_{i}F_{r}\left(Y\right)d\left[Y\right]_{r}^{ji}.
\]
By assumption $\partial_{i}F\in C_{b}^{1,2}$, hence from theorem
\ref{thm:Dupire Ito}, 
\[
d\left(\partial_{i}F_{t}\right)=\partial_{0}\left(\partial_{i}F_{t}\right)dt+\sum_{j=1}^{e}\partial_{j}\partial_{i}F_{t}dY_{t}^{j}+\frac{1}{2}\sum_{j,k=1}^{e}\left(\partial_{j}\partial_{k}\partial_{i}F_{t}\right)d\left[Y\right]_{t}^{jk},
\]
and it follows that, 
\[
\left[\partial_{i}F,Y^{i}\right]_{t}=\sum_{j=1}^{e}\left[\int\partial_{j}\partial_{i}FdY^{j},Y^{i}\right]_{t}
\]
which equals $\sum_{j=1}^{e}\int\partial_{j}\partial_{i}F\left(t,Y\right)d\left[Y\right]_{t}^{ji}$.
\end{proof}

\subsection{Vector fields as derivations of functionals}

Let $M$ be a smooth manifold, then one can regard a vector field
$V$ on $M$ as a derivation, $V\in Der_{\mathbb{R}}\left(C^{\infty}\left(M\right)\right)$.
We need to find a similar concept for the (infinite dimensional) space
$\Lambda$ which respects the additional structure (of being the product
of a time coordinate and a pathspace). It seems natural to define
for a given vector field $V\in C_{b}^{\infty}\left(\mathbb{R}^{e},\mathbb{R}^{e}\right)$,
\[
V\cdot F\left(t,y\right)=\sum_{i=1}^{e}V^{i}\left(y_{t}\right)\partial_{i}F\left(t,y\right)\text{ for }F\in C_{b}^{\infty,\infty}\left(\Lambda\right).
\]
However, above formulation does not take into account the time decay
in functionals measured by the functional time derivative $\partial_{0}$.
We therefore instead consider maps $\overline{V}:\mathbb{R}^{e}\rightarrow\mathbb{R}^{e+1}$
with an additional $0$th coordinate $\overline{V}=\left(\overline{V}^{j}\right)_{j=0}^{e}$
and let $\overline{V}$ act on $C_{b}^{\infty,\infty}\left(\Lambda\right)$
as 
\begin{equation}
\overline{V}\cdot F\left(t,y\right)=\sum_{i=0}^{e}\overline{V}^{i}\left(y_{t}\right)\partial_{i}F\left(t,y\right)\text{ for }F\in C_{b}^{\infty,\infty}\left(\Lambda\right).\label{eq:vf as derivation}
\end{equation}

\begin{lem}[\textbf{and Definition}]
\label{lem:Given-a-map}Given a map $\overline{V}\in C_{b}^{\infty}\left(\mathbb{R}^{e},\mathbb{R}^{e+1}\right)$,
we can identify $\overline{V}$ as an element of $Der_{\mathbb{R}}\left(C_{b}^{\infty,\infty}\left(\Lambda\right)\right),$
the derivations on the $\mathbb{R}-$algebra $C_{b}^{\infty,\infty}\left(\Lambda\right)$,
by setting 
\[
\overline{V}\cdot F\left(t,y\right):=\sum_{i=0}^{e}\overline{V}^{i}\left(y_{t}\right)\partial_{i}F\left(t,y\right)\text{ for }F\in C_{b}^{\infty,\infty}\left(\Lambda\right).
\]
 \end{lem}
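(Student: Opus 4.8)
The plan is to verify that the prescription $F\mapsto\overline{V}\cdot F$ defines an $\mathbb{R}$-linear self-map of the algebra $C_{b}^{\infty,\infty}\left(\Lambda\right)$ which obeys the Leibniz rule, since these are precisely the defining properties of an element of $Der_{\mathbb{R}}\left(C_{b}^{\infty,\infty}\left(\Lambda\right)\right)$. Linearity is immediate once well-definedness is in place: each functional derivative $\partial_{i}$ is linear (being a limit of linear combinations of path evaluations), multiplication by a fixed functional is linear, and a finite sum of linear maps is linear.

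The first genuine step is to show that $\overline{V}\cdot F$ again lies in $C_{b}^{\infty,\infty}\left(\Lambda\right)$, and I would do this factor by factor in each summand $\overline{V}^{i}\left(y_{t}\right)\partial_{i}F\left(t,y\right)$. For the coefficient, set $g_{i}\left(t,x\right)=\overline{V}^{i}\left(x\right)$ and observe that $\left(t,y\right)\mapsto\overline{V}^{i}\left(y_{t}\right)=g_{i}\left(t,y_{t}\right)$ is of the type covered by part (2) of Lemma \ref{lem:product}; applying that lemma (and its evident higher-order iteration) identifies the spatial functional derivatives of this coefficient with the ordinary partial derivatives of $\overline{V}^{i}$ and its time derivative with zero, so that the hypothesis $\overline{V}\in C_{b}^{\infty}\left(\mathbb{R}^{e},\mathbb{R}^{e+1}\right)$ gives $\overline{V}^{i}\left(y_{\cdot}\right)\in C_{b}^{\infty,\infty}\left(\Lambda\right)$. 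For the second factor, since $F\in C_{b}^{\infty,\infty}\left(\Lambda\right)$ every mixed derivative $\partial_{\alpha_{1}}\cdots\partial_{\alpha_{k}}\partial_{i}F$ is itself a bounded, continuous higher-order derivative of $F$ indexed by the finite word $\left(\alpha_{1},\ldots,\alpha_{k},i\right)$, whence $\partial_{i}F\in C_{b}^{\infty,\infty}\left(\Lambda\right)$ as well. Multiplying the two factors and summing the finitely many terms then keeps us inside the algebra by part (1) of Lemma \ref{lem:product}, establishing closure.

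The remaining step is the Leibniz rule, which is purely algebraic. For $F,H\in C_{b}^{\infty,\infty}\left(\Lambda\right)$ I would expand with the product rule $\partial_{i}\left(FH\right)=\left(\partial_{i}F\right)H+F\left(\partial_{i}H\right)$ of Lemma \ref{lem:product}(1) and regroup the finite sum:
\[
\overline{V}\cdot\left(FH\right)=\sum_{i=0}^{e}\overline{V}^{i}\left(y_{t}\right)\bigl[\left(\partial_{i}F\right)H+F\left(\partial_{i}H\right)\bigr]=\left(\overline{V}\cdot F\right)H+F\left(\overline{V}\cdot H\right),
\]
where the last equality factors $H$ out of the first group of terms and $F$ out of the second.

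Since everything rests on Lemma \ref{lem:product}, there is no analytic obstacle; the only point requiring care is the closure claim, and within it the observation that the coefficient functionals $\overline{V}^{i}\left(y_{\cdot}\right)$ are themselves smooth and, crucially, \emph{bounded} functionals with bounded derivatives of all orders. This is exactly where the hypothesis $\overline{V}\in C_{b}^{\infty}$ (rather than merely $C^{\infty}$) enters, guaranteeing that each mixed derivative $\partial_{\alpha_{1}}\cdots\partial_{\alpha_{k}}\left(\overline{V}\cdot F\right)$ remains uniformly bounded over $\Lambda$ and hence that $\overline{V}\cdot F$ truly lands in $C_{b}^{\infty,\infty}\left(\Lambda\right)$.
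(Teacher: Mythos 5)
Your proof is correct and takes essentially the same route as the paper's: both reduce the closure of $C_{b}^{\infty,\infty}\left(\Lambda\right)$ under $F\mapsto\overline{V}\cdot F$ and the Leibniz rule to Lemma \ref{lem:product}. The paper's argument is simply a terser version of yours, and your extra details (handling the coefficients $\overline{V}^{i}\left(y_{t}\right)$ via part (2) of Lemma \ref{lem:product}, observing that $\partial_{i}F$ stays in $C_{b}^{\infty,\infty}\left(\Lambda\right)$, and the explicit regrouping for the Leibniz identity) fill in exactly what the paper leaves implicit.
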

\begin{proof}
By lemma \ref{lem:product}, $\overline{V}$ maps $C_{b}^{\infty,\infty}\left(\Lambda\right)$
to $C_{b}^{\infty,\infty}\left(\Lambda\right)$ functionals. One need
to verify that $C_{b}^{\infty,\infty}\left(\Lambda\right)$ is an
$\mathbb{R}$-algebra and that $\overline{V}\cdot\left(FG\right)=\left(\overline{V}\cdot F\right)G+F\left(\overline{V}\cdot G\right)$,
but this follows again directly from lemma \ref{lem:product}.\end{proof}
\begin{prop}
\label{cor:Ito for FDE}Let $X=\left(X^{i}\right)_{i=0}^{d}$ be a
continuous, semi-martingale with $X_{t}^{0}=t$,$V_{i}=\left(V_{i}^{j}\right)_{j=1}^{e}\in C_{b}^{\infty}\left(\mathbb{R}^{e},\mathbb{R}^{e}\right)$,
$i\in\left\{ 0,1,\ldots,d\right\} $ and $Y=\left(Y^{j}\right)_{j=1}^{d}$
be the unique strong solution of the SDE 
\begin{eqnarray*}
dY_{t} & = & V\left(Y_{t}\right)\circ dX\\
 & = & V_{0}\left(Y_{t}\right)dt+\sum_{i=1}^{d}V_{i}\left(Y_{t}\right)\circ dX_{t}^{i}.
\end{eqnarray*}
Define $\overline{V}_{i}\in C_{b}^{\infty}\left(\mathbb{R}^{e},\mathbb{R}^{e+1}\right)$
as%
\footnote{$\delta_{i,j}$ the usual Kronecker delta, $\delta_{ij}=1$ if $i=j,$
otherwise equal $0$%
} 
\[
\overline{V}_{i}=\left(\delta_{0i},V_{i}^{1},\ldots,V_{i}^{e}\right)^{T}\text{ for }i\in\left\{ 0,\ldots,d\right\} .
\]
Then for every \textup{$F\in C_{b}^{\infty,\infty}\left(\Lambda\right)$
}we have a.s. \textup{
\begin{eqnarray*}
F_{t}\left(Y\right) & = & F_{s}\left(Y\right)+\sum_{i=0}^{d}\int_{s}^{t}\overline{V}_{i}\cdot F_{r}\left(Y\right)\circ dX_{r}^{i}\\
 & = & F_{s}\left(Y\right)+\int_{s}^{t}\overline{V}\cdot F_{r}\left(Y\right)\circ dX_{r}.
\end{eqnarray*}
}\end{prop}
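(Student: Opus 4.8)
The plan is to invoke the Stratonovich functional It\={o} formula of Corollary~\ref{cor:functiional strat} and then substitute the defining equation for $Y$, exploiting that the Stratonovich integral obeys the ordinary substitution rule. Since $F\in C_{b}^{\infty,\infty}\left(\Lambda\right)\subset C_{b}^{1,3}\left(\Lambda\right)$, Corollary~\ref{cor:functiional strat} applies and gives
\[
F_{t}\left(Y\right)-F_{s}\left(Y\right)=\int_{s}^{t}\partial_{0}F_{r}\left(Y\right)dr+\sum_{j=1}^{e}\int_{s}^{t}\partial_{j}F_{r}\left(Y\right)\circ dY_{r}^{j}.
\]

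First I would insert the components of the SDE, $dY_{r}^{j}=V_{0}^{j}\left(Y_{r}\right)dr+\sum_{i=1}^{d}V_{i}^{j}\left(Y_{r}\right)\circ dX_{r}^{i}$, into each Stratonovich integral $\int_{s}^{t}\partial_{j}F_{r}\left(Y\right)\circ dY_{r}^{j}$. The crucial ingredient is the associativity (substitution) property of the Stratonovich integral: whenever $Y^{j}=\int H\circ dX$ one has $\int Z\circ dY^{j}=\int ZH\circ dX$ for continuous semimartingale integrands $Z$. This yields
\[
\int_{s}^{t}\partial_{j}F_{r}\left(Y\right)\circ dY_{r}^{j}=\int_{s}^{t}\partial_{j}F_{r}\left(Y\right)V_{0}^{j}\left(Y_{r}\right)dr+\sum_{i=1}^{d}\int_{s}^{t}\partial_{j}F_{r}\left(Y\right)V_{i}^{j}\left(Y_{r}\right)\circ dX_{r}^{i}.
\]

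Next I would sum over $j$ and regroup by the index $i$, using that $X_{r}^{0}=r$ (so $dX_{r}^{0}=dr$) together with the definition $\overline{V}_{i}\cdot F=\sum_{k=0}^{e}\overline{V}_{i}^{k}\partial_{k}F$ with $\overline{V}_{i}^{0}=\delta_{0i}$. For $i=0$ the coefficient of $dr$ is $\partial_{0}F+\sum_{j=1}^{e}V_{0}^{j}\partial_{j}F=\overline{V}_{0}\cdot F$, while for $i\in\left\{ 1,\ldots,d\right\} $ the coefficient of $\circ dX^{i}$ is $\sum_{j=1}^{e}V_{i}^{j}\partial_{j}F=\overline{V}_{i}\cdot F$. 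Collecting these terms establishes the asserted identity.

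The step requiring the most care is the substitution rule of the second paragraph, which is exactly where the Stratonovich formulation is indispensable: the It\={o} integral would leave behind correction terms, whereas the Stratonovich integral follows the rules of ordinary calculus. One justifies it by passing to It\={o} form, observing that the It\={o}--Stratonovich correction $\frac{1}{2}\left[H,X\right]$ has bounded variation and hence does not contribute to covariations against $X$, and applying the product rule $d\left[ZH,X\right]=Z\,d\left[H,X\right]+H\,d\left[Z,X\right]$; alternatively it may simply be cited as a standard property of Stratonovich integration. The regularity $F\in C_{b}^{\infty,\infty}\left(\Lambda\right)$ guarantees that all the integrands $\partial_{j}F_{r}\left(Y\right)$ and $V_{i}^{j}\left(Y_{r}\right)$ are continuous semimartingales of the required smoothness, so no integrability difficulties arise.
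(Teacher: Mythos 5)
Your proposal is correct and follows essentially the same route as the paper's own proof: apply the Stratonovich functional It\={o} formula of Corollary~\ref{cor:functiional strat}, substitute the SDE for $dY^{j}$ using the Stratonovich substitution rule, and regroup the $dr$ and $\circ dX^{i}$ terms to recognize the derivations $\overline{V}_{i}\cdot F$. The only difference is that you spell out the justification of the substitution step, which the paper performs silently in its chain of displayed equalities.
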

\begin{proof}
Applying corollary \ref{cor:functiional strat} to $F$ gives 
\begin{eqnarray*}
F_{t}\left(Y\right) & = & F_{s}\left(Y\right)+\int_{s}^{t}\partial_{0}F_{r}\left(Y\right)dr+\sum_{j=1}^{e}\int_{s}^{t}\partial_{j}F_{r}\left(Y\right)\circ dY_{r}^{j}\\
 & = & F_{s}\left(Y\right)+\int_{s}^{t}\partial_{0}F_{r}\left(Y\right)dr+\sum_{j=1}^{e}\sum_{i=0}^{d}\int_{s}^{t}\partial_{j}F_{r}\left(Y\right)V_{i}^{j}\left(Y_{r}\right)\circ dX_{r}^{i}\\
 & = & F_{s}\left(Y\right)+\int_{s}^{t}\partial_{0}F_{r}\left(Y\right)dr+\sum_{j=1}^{e}\int_{s}^{t}\partial_{j}F_{r}\left(Y\right)V_{0}^{j}\left(Y_{r}\right)\circ dr\\
 &  & +\sum_{j=1}^{e}\sum_{i=1}^{d}\int_{s}^{t}\partial_{j}F_{r}\left(Y\right)V_{i}^{j}\left(Y_{r}\right)\circ dX_{r}^{i}
\end{eqnarray*}
Identifying $\overline{V}_{0}$ and $\overline{V}_{1},\ldots,\overline{V}_{d}$
as a derivation, as in lemma \ref{lem:Given-a-map} (recall that $\overline{V}_{i}^{0}=0$
for $i\neq0$), this becomes
\begin{eqnarray*}
F_{t}\left(Y\right) & = & F_{s}\left(Y\right)+\int_{s}^{t}\overline{V}_{0}\cdot F_{r}\left(Y\right)\circ dX_{r}^{0}+\sum_{i=1}^{d}\int_{s}^{t}\overline{V}_{i}\cdot F_{r}\left(Y\right)\circ dX_{r}^{i}
\end{eqnarray*}

\end{proof}
To identify compositions of the vector fields we define the set of
multi-indices $\mathcal{A}$ by
\[
\mathcal{A}:=\bigcup_{k=1}^{\infty}\{0,\ldots,d\}^{k}
\]
and let $I=(\alpha_{1},\ldots,\alpha_{k})\in\mathcal{A}$ be a multi-index.
Given $I\in\mathcal{A}$ set $\left|I\right|=card\left(I\right)$
and also define $\Vert I\Vert:=|I|+card(j:\alpha_{j}=0)$. Finally
let
\[
\mathcal{A}(j)=\{I\in\mathcal{A}:\Vert I\Vert\leq j\}.
\]
Given a multi-index $I=(\alpha_{1},\ldots,\alpha_{k})\in\mathcal{A}$
we define $\overline{V}_{I}:=\overline{V}_{\alpha_{1}}\cdots\overline{V}_{\alpha_{k}}$.
\begin{defn}
Set 
\[
\Delta_{k}\left(s,t\right)=\left\{ \left(t_{1},\ldots,t_{k}\right)\in\left[s,t\right]^{k},s\leq t_{1}\leq\cdots\leq t_{k}\leq t\right\} 
\]
Let $X=\left(X^{i}\right)_{i=0}^{d}$ be a continuous semi-martingale
with $X_{t}^{0}=t$. Define the iterated Stratonovich integrals of
$X$ as 
\[
\int_{\Delta_{\left|I\right|}\left(s,t\right)}\circ dX^{I}=\int_{\Delta_{k}\left(s,t\right)}\circ dX_{t_{1}}^{\alpha_{1}}\cdots\circ dX_{t_{k}}^{\alpha_{k}}\text{ with }I=\left(\alpha_{1},\ldots,\alpha_{k}\right)\in\mathcal{A}.
\]

\end{defn}

\subsection{The functional Taylor series and a remainder estimate}

We can now formulate our main theorem. The reader will notice that
applied to a functional of the form $F\left(t,x\right)=f\left(t,x_{t}\right)$,
one recovers the usual stochastic Taylor expansion, \cite{kloeden-platen-92,MR2052260}.
\begin{thm}
\label{pro:Euler approx}Let $X=\left(X^{i}\right)_{i=0}^{d}$ be
a continuous, semi-martingale with $X_{t}^{0}=t$, $V=\left(V_{i}\right)_{i=1}^{d}$
with $V_{i}\in C_{b}^{\infty}\left(\mathbb{R}^{e},\mathbb{R}^{e}\right)$
and let $Y$ be the unique strong solution of the SDE 
\begin{eqnarray*}
dY_{t} & = & V\left(Y_{t}\right)\circ dX_{t}\equiv\sum_{i=0}^{d}V_{i}\left(Y_{t}\right)\circ dX_{t}^{i}.
\end{eqnarray*}
Define $\overline{V}_{i}\in C_{b}^{\infty}\left(\mathbb{R}^{e},\mathbb{R}^{e+1}\right)$
as 
\[
\overline{V}_{i}=\left(\delta_{0i},V_{i}^{1},\ldots,V_{i}^{e}\right)^{T}\text{ for }i\in\left\{ 0,\ldots,d\right\} .
\]
Fix $m\in\mathbb{N}$. Then for every $F\in C_{b}^{\infty,\infty}\left(\Lambda\right)$
we have 
\begin{eqnarray*}
F_{t}\left(Y\right)-F_{s}\left(Y\right) & = & \sum_{I\in\mathcal{A}(m)}\overline{V}_{I}\cdot F_{s}\left(Y\right)\int_{\Delta_{|I|}\left(s,t\right)}\circ dX^{I}+R_{st}^{m}.
\end{eqnarray*}
where 
\[
R_{st}^{m}=\sum_{\begin{array}{c}
(i_{1},\ldots,i_{N})\notin\mathcal{A}(m)\\
(i_{2},\ldots,i_{N})\in\mathcal{A}(m)
\end{array}}\int_{s<r_{1}<\ldots<r_{N}<t}\overline{V}_{i_{1}}\cdots\overline{V}_{i_{N}}\cdot F_{r}\left(Y\right)\circ dX_{r_{1}}^{i_{1}}\cdots\circ dX_{r_{N}}^{i_{N}}.
\]
\end{thm}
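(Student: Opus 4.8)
The plan is to iterate the functional It\^o formula of Proposition \ref{cor:Ito for FDE} and to organize the resulting terms by the combinatorics of hierarchical index sets, exactly as in the proof of the classical stochastic Taylor expansion (cf. \cite{kloeden-platen-92}), but with the differential operators replaced by the derivations $\overline{V}_i$ and the ordinary It\^o formula replaced by Proposition \ref{cor:Ito for FDE}. The starting observation is that Proposition \ref{cor:Ito for FDE} is precisely the single step of the claimed identity: for every $G\in C_b^{\infty,\infty}(\Lambda)$ one has $G_r(Y)=G_s(Y)+\sum_{i=0}^d\int_s^r \overline{V}_i\cdot G_u(Y)\circ dX_u^i$, and by Lemma \ref{lem:Given-a-map} together with Lemma \ref{lem:product} every iterated derivation $\overline{V}_I\cdot F=\overline{V}_{\alpha_1}\cdots\overline{V}_{\alpha_k}\cdot F$ again lies in $C_b^{\infty,\infty}(\Lambda)$, so the formula may be applied repeatedly to the leading coefficient of any iterated integral.

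I would first phrase the result for an arbitrary finite hierarchical set. Call $\mathcal{H}\subset\mathcal{A}\cup\{\emptyset\}$ \emph{hierarchical} if $\emptyset\in\mathcal{H}$ and, for every nonempty $I=(i_1,\ldots,i_N)\in\mathcal{H}$, the shortened index $-I:=(i_2,\ldots,i_N)$ again lies in $\mathcal{H}$; its remainder set is $\mathcal{B}(\mathcal{H})=\{I\notin\mathcal{H}:-I\in\mathcal{H}\}$. The target identity, written for a general $\mathcal{H}$, is
\[
F_t(Y)=\sum_{I\in\mathcal{H}}\overline{V}_I\cdot F_s(Y)\int_{\Delta_{|I|}(s,t)}\circ dX^I+\sum_{I\in\mathcal{B}(\mathcal{H})}\int_{\Delta_{|I|}(s,t)}\overline{V}_I\cdot F_{r_1}(Y)\circ dX^I,
\]
with the conventions that the empty index contributes the summand $F_s(Y)$, that an iterated integral of length zero equals $1$, and that in the remainder integral the coefficient $\overline{V}_I\cdot F$ is kept at the innermost running time $r_1$. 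Taking $\mathcal{H}=\mathcal{A}(m)\cup\{\emptyset\}$ --- which is hierarchical and finite because $|I|\leq\Vert I\Vert\leq m$ bounds the length of its elements --- and moving the $F_s(Y)$ term to the left-hand side yields exactly the assertion of the theorem, since $\mathcal{B}(\mathcal{A}(m)\cup\{\emptyset\})$ is precisely the index set $\{(i_1,\ldots,i_N)\notin\mathcal{A}(m):(i_2,\ldots,i_N)\in\mathcal{A}(m)\cup\{\emptyset\}\}$ defining $R_{st}^m$.

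The general identity I would prove by induction on $\mathrm{card}(\mathcal{H})$, adding one multi-index at a time. The base case $\mathcal{H}=\{\emptyset\}$ is Proposition \ref{cor:Ito for FDE} verbatim, since there $\mathcal{B}(\{\emptyset\})=\{(i):0\leq i\leq d\}$. For the inductive step I pick $\beta\in\mathcal{B}(\mathcal{H})$, set $\mathcal{H}'=\mathcal{H}\cup\{\beta\}$ (again hierarchical, as $-\beta\in\mathcal{H}$), and apply Proposition \ref{cor:Ito for FDE} to the coefficient $\overline{V}_\beta\cdot F\in C_b^{\infty,\infty}(\Lambda)$ inside the $\beta$-th remainder integral. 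Substituting $\overline{V}_\beta\cdot F_{r_1}(Y)=\overline{V}_\beta\cdot F_s(Y)+\sum_{i=0}^d\int_s^{r_1}\overline{V}_i\overline{V}_\beta\cdot F_u(Y)\circ dX_u^i$, the frozen part $\overline{V}_\beta\cdot F_s(Y)$ produces the completed term with index $\beta$, while the remaining integrals produce fresh remainder terms indexed by $(i,\beta)$, $i=0,\ldots,d$. A short check of the combinatorics gives $\mathcal{B}(\mathcal{H}')=(\mathcal{B}(\mathcal{H})\setminus\{\beta\})\cup\{(i,\beta):0\leq i\leq d\}$, using that $(i,\beta)\notin\mathcal{H}$ (otherwise $-(i,\beta)=\beta\in\mathcal{H}$, contradicting $\beta\in\mathcal{B}(\mathcal{H})$); thus the substitution transforms the identity for $\mathcal{H}$ into the one for $\mathcal{H}'$.

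The routine points, which I would not belabor, are the bookkeeping of the index sets and the legitimacy of substituting the semimartingale decomposition of $\overline{V}_\beta\cdot F_\cdot(Y)$ under the Stratonovich integral sign --- this is merely linearity of the integral, pulling the constant $\overline{V}_\beta\cdot F_s(Y)$ out of $\int_{\Delta_{|\beta|}(s,t)}\circ dX^\beta$. The one place that genuinely uses the hypotheses is the repeated applicability of Proposition \ref{cor:Ito for FDE}, and I expect the main (if modest) obstacle to be verifying cleanly that each iterated derivation $\overline{V}_I\cdot F$ remains in $C_b^{\infty,\infty}(\Lambda)$ with the boundedness required to invoke the functional It\^o formula; this I would track through Lemma \ref{lem:product} together with the smoothness and boundedness of the vector fields $V_i$.
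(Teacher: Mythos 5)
Your proposal is correct and follows essentially the same route as the paper: both take Proposition \ref{cor:Ito for FDE} as the single expansion step and iterate it, re-expanding the coefficient $\overline{V}_I\cdot F$ (which stays in $C_b^{\infty,\infty}\left(\Lambda\right)$ by Lemmas \ref{lem:product} and \ref{lem:Given-a-map}) frozen at the innermost running time. The only difference is presentational: where the paper displays two iterations and concludes with ``iterating this procedure, we finally arrive at'' the claimed identity, you make that iteration rigorous via induction over hierarchical index sets in the style of Kloeden--Platen, which is a tightening of the same argument rather than a different one.
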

\begin{proof}
Applying proposition \ref{cor:Ito for FDE} gives 
\begin{eqnarray*}
F_{t}\left(Y\right) & = & F_{s}\left(Y\right)+\sum_{i=0}^{d}\int_{s}^{t}\overline{V}_{i}\cdot F_{r}\left(Y\right)\circ dX_{r}^{i}.
\end{eqnarray*}
Now by assumption on $F$ and $V$, the functional $\overline{V}_{i}\cdot F$
is again in $C_{b}^{\infty,\infty}$ and applying proposition \ref{cor:Ito for FDE}
to $\overline{V}_{i}\cdot F$ gives 
\begin{eqnarray*}
F_{t}\left(Y\right) & = & F_{s}\left(Y\right)+\sum_{i_{1}=0}^{d}\overline{V}_{i_{1}}\cdot F_{s}\left(Y\right)\int_{s}^{t}\circ dX^{i_{1}}\\
 &  & +\sum_{i_{1},i_{2}=0}^{d}\int_{s<r_{2}<r_{1}<t}\overline{V}_{i_{2}}\cdot\overline{V}_{i_{1}}\cdot F_{r_{2}}\left(Y\right)\circ dX_{r_{2}}^{i_{2}}\circ dX_{r_{1}}^{i_{1}}.
\end{eqnarray*}
Iterating this procedure, we finally arrive at 
\begin{eqnarray*}
F_{t}\left(Y\right) & = & F_{s}\left(Y\right)+\sum_{I\in\mathcal{A}\left(m\right)}\overline{V}_{I}\cdot F_{s}\left(Y\right)\int_{\Delta_{|I|}\left(s,t\right)}\circ dX_{st}^{I}\\
 &  & +\sum_{\begin{array}{c}
(i_{1},\ldots,i_{N})\notin\mathcal{A}(m)\\
(i_{2},\ldots,i_{N})\in\mathcal{A}(m)
\end{array}}\int_{s<r_{1}<\ldots<r_{N}<t}\overline{V}_{i_{1}}\cdots\overline{V}_{i_{N}}\cdot F_{r_{1}}\left(Y\right)\circ dX_{r_{1}}^{i_{1}}\cdots\circ dX_{r_{N}}^{i_{N}}.
\end{eqnarray*}
\end{proof}
\begin{rem}
The choice of truncation in the Taylor approximation in Theorem \ref{pro:Euler approx}
reflects how the semi-martingale part of the noise $X$ scales compared
to the time component $X_{t}^{0}=t$. If $X$ is a Brownian motion,
then (cf.~\cite{MR2052260}) for $I=\left(\alpha_{1},\ldots,\alpha_{k}\right)$
\[
\int_{\Delta_{\left(0,t\right)}^{k}}\circ dX^{I}\text{ is equal in law to }\sqrt{t}^{k+card\left\{ j,\alpha_{j}=0\right\} }\int_{\Delta_{\left(0,1\right)}^{k}}\circ dX^{I}.
\]
This explains the particular truncation of the Taylor expansion via
the multi-index sets $\mathcal{A}\left(m\right)$ and allows to arrive
at the $L^{2}$ error estimate in corollary \ref{cor:error} which
is important in numerical applications, e.g. \cite{MR2052260}.
\begin{rem}
Even if there is no drift vector field in the SDE, that is in above
notation $V_{0}=\left(0,\ldots,0\right)^{T}$, we still have to deal
with iterated integrals which involve $dX_{t}^{0}=dt$. This is in
contrast to the stochastic Taylor expansion for functions $f:\mathbb{R}^{e}\rightarrow\mathbb{R}^{e}$
due to the time decay in the functional $F\in C_{b}^{\infty,\infty}\left(\Lambda\right)$
which is picked up by the functional derivative $\partial_{0}$ via
the first coordinate of $\overline{V}_{0}=\left(1,0,\ldots,0\right)^{T}$.
\end{rem}
\end{rem}
Of special importance is the case of Brownian noise in which case
we can give a $L^{2}$ estimate for the remainder term.
\begin{cor}
\label{cor:error}Let $\left(B^{i}\right)_{i=1}^{d}$ be a $d$-dimensional
Brownian motion and set $B=\left(B^{i}\right)_{i=0}^{d}$ with $B_{t}^{0}=t$.
Let $V=\left(V_{i}\right)_{i=1}^{d}$ with $V_{i}\in C_{b}^{\infty}\left(\mathbb{R}^{e},\mathbb{R}^{e}\right)$
and $Y$ be the unique strong solution of the Stratonovich SDE with
drift, 
\begin{eqnarray*}
dY_{t} & = & V\left(Y_{t}\right)\circ dB_{t}\equiv\sum_{i=0}^{d}V_{i}\left(Y_{t}\right)\circ dB_{t}^{i}.
\end{eqnarray*}
Define $\overline{V}_{i}\in C_{b}^{\infty}\left(\mathbb{R}^{e},\mathbb{R}^{e+1}\right)$
as $\overline{V}_{i}=\left(\delta_{0i},V_{i}^{1},\ldots,V_{i}^{e}\right)^{T}\text{ for }i\in\left\{ 0,\ldots,d\right\} .$
Fix $m\in\mathbb{N}$. Then for every $F\in C_{b}^{\infty,\infty}\left(\Lambda\right)$
we have 
\begin{eqnarray*}
F_{t}\left(Y\right)-F_{s}\left(Y\right) & = & \sum_{I\in\mathcal{A}(m)}\overline{V}_{I}\cdot F_{s}\left(Y\right)\int_{\Delta_{|I|}\left(s,t\right)}\circ dB^{I}+R_{st}^{m}.
\end{eqnarray*}
Moreover, there exists a constant $c=c\left(d,m\right)$, only depending
on $d$ and $m$, such that 
\[
\mathbb{E}\left[\left|R_{st}^{m}\right|^{2}\right]^{1/2}\leq c\sum_{j=m+1}^{m+2}\sup_{I\in\mathcal{A}(j)\setminus\mathcal{A}(j-1)}\left|t-s\right|^{\Vert I\Vert/2}\sup_{\left(r,y\right)\in\Lambda}\left|\overline{V}_{I}\cdot F\left(r,y\right)\right|.
\]
\end{cor}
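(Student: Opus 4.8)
The plan is to estimate each iterated Stratonovich integral occurring in $R_{st}^m$ separately in $L^2$ and then sum the finitely many contributions. The first observation is purely combinatorial: every word $I=(i_1,\ldots,i_N)$ occurring in $R_{st}^m$ has $(i_2,\ldots,i_N)\in\mathcal{A}(m)$, so $\Vert(i_2,\ldots,i_N)\Vert\le m$, while $I\notin\mathcal{A}(m)$ forces $\Vert I\Vert\ge m+1$. Since prepending $i_1$ raises $\Vert\cdot\Vert$ by $1$ (if $i_1\neq0$) or by $2$ (if $i_1=0$), we get $\Vert I\Vert\in\{m+1,m+2\}$, which is the origin of the sum over $j\in\{m+1,m+2\}$. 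Crucially, $\Vert I\Vert=m+2$ can only occur when $i_1=0$, a point that will matter below.

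Second, I would record an elementary $L^2$ bound for iterated \emph{It\={o}} integrals with a bounded adapted integrand $G$: by induction on the length of the word $J=(\beta_1,\ldots,\beta_{N'})$, using the It\={o} isometry on each stochastic integration ($\beta_k\neq0$) and the Cauchy-Schwarz inequality on each time integration ($\beta_k=0$), one obtains $\mathbb{E}\bigl[\,\vert\int_{\Delta_{N'}(s,t)}G_{r_1}dB^{J}\vert^2\,\bigr]^{1/2}\le c(J)\,\sup\vert G\vert\,\vert t-s\vert^{\Vert J\Vert/2}$ with $c(J)$ depending only on the word $J$. The two cases reflect exactly the scaling $\vert t-s\vert^{1/2}$ per Brownian integration and $\vert t-s\vert$ per time integration encoded in $\Vert J\Vert=\vert J\vert+\mathrm{card}\{k:\beta_k=0\}$.

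The main work is to pass from the Stratonovich integrals in $R_{st}^m$ to such It\={o} integrals. Using Corollary~\ref{cor:functiional strat} (which, via the functional It\={o} formula, identifies the Stratonovich correction as a time integral of a functional derivative, and which applies since $\overline{V}_I\cdot F\in C_b^{\infty,\infty}(\Lambda)$), I would convert each term $\int(\overline{V}_I\cdot F)_{r_1}\circ dB^{I}$ into a finite linear combination of It\={o} iterated integrals $\int(\overline{V}_{I'}\cdot F)_{r_1}dB^{J}$, peeling off one integrator at a time from the inside out. The corrections are of two kinds: a covariation between two adjacent Brownian integrators, which collapses a pair $\circ dB^{i}\circ dB^{i}$ into $\tfrac12\,dr$ and leaves the integrand unchanged; and the covariation of the innermost integrand $H=\overline{V}_I\cdot F$ with the innermost integrator $dB^{i_1}$ ($i_1\neq0$), which replaces $dB^{i_1}$ by $dr$ and turns $H$ into $\overline{V}_{i_1}\cdot H=\overline{V}_{(i_1,I)}\cdot F$. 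In both cases one checks that the invariant $\Vert J\Vert=\Vert I'\Vert$ is preserved (a collapse removes degree $1{+}1$ and adds degree $2$; an integrand differentiation adds $1$ to each side). Moreover the second type of correction can occur \emph{at most once}, since after it the innermost layer is a finite-variation time integral and no longer covaries with the remaining integrators.

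Putting these together yields the estimate. Because the integrand is differentiated at most once, every resulting It\={o} word satisfies $\Vert I'\Vert\in\{\Vert I\Vert,\Vert I\Vert+1\}$; and because $\Vert I\Vert=m+2$ forces $i_1=0$ (so no integrand differentiation is possible), all surviving words have $\Vert I'\Vert=\Vert J\Vert\in\{m+1,m+2\}$ with integrand $\overline{V}_{I'}\cdot F$. Applying the It\={o} lemma from the second step to each term, using $\Vert J\Vert=\Vert I'\Vert$, and summing the finitely many terms (whose number and whose constants $c(J)$ depend only on $d$ and $m$) gives $\mathbb{E}[\vert R_{st}^m\vert^2]^{1/2}\le c\sum_{j=m+1}^{m+2}\sup_{I\in\mathcal{A}(j)\setminus\mathcal{A}(j-1)}\vert t-s\vert^{\Vert I\Vert/2}\sup_{(r,y)}\vert\overline{V}_I\cdot F(r,y)\vert$. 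The main obstacle is the bookkeeping in the third step: one must verify that the Stratonovich-to-It\={o} conversion never pushes the degree beyond $m+2$ nor the integrand beyond a single extra derivation, and it is precisely the defining constraint $(i_2,\ldots,i_N)\in\mathcal{A}(m)$ of the remainder that guarantees this.
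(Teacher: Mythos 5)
Your proposal is correct, and it assembles the same basic ingredients as the paper's proof --- Stratonovich-to-It\={o} conversion, the functional It\={o} formula (corollary \ref{cor:functiional strat}) to identify the innermost bracket $\left[(\overline{V}_{I}\cdot F)(\cdot,Y),B^{i_{1}}\right]$ as $\int\overline{V}_{(i_{1},I)}\cdot F\, dr$, It\={o} isometry plus Jensen/Cauchy--Schwarz for the time integrations, and the combinatorics forcing $\Vert I\Vert\in\{m+1,m+2\}$ with $\Vert I\Vert=m+2$ only if $i_{1}=0$ --- but it packages them differently. The paper proves one self-contained lemma bounding the Stratonovich iterated integral directly in $L^{2}$,
\[
\mathbb{E}\left(\int_{0<t_{1}<\dots<t_{k}<t}G(t_{1},Y)\circ dB_{t_{1}}^{\alpha_{1}}\dots\circ dB_{t_{k}}^{\alpha_{k}}\right)^{2}\leq C\left(t^{\Vert I\Vert}\Vert G\Vert_{\infty}^{2}+(1-\delta_{0,\alpha_{1}})t^{\Vert I\Vert+1}\Vert\overline{V}_{\alpha_{1}}G\Vert_{\infty}^{2}\right),
\]
and establishes it by induction on $k$ peeling off the \emph{outermost} integrator: a time integrator is handled by Jensen and scaling, a Brownian one by writing Stratonovich as It\={o} plus a bracket that collapses the outer two integrators (the $\delta_{\alpha_{k},\alpha_{k+1}}$ term), and the integrand differentiation enters only in the base case $k=1$ via the functional It\={o} formula; the two-term right-hand side of this induction hypothesis is precisely the paper's bookkeeping device for your observation that the integrand can be differentiated at most once. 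You instead factor the argument into a purely algebraic step --- a complete inside-out conversion of each Stratonovich iterated integral into finitely many It\={o} iterated integrals, with the invariant $\Vert J\Vert=\Vert I'\Vert$ --- followed by a standard, reusable $L^{2}$ estimate for It\={o} iterated integrals with bounded adapted integrands. Your factorization makes the structure more transparent (in particular why only the exponents $j=m+1,m+2$ survive, and why $\Vert I\Vert=m+2$ forces $i_{1}=0$ so that no extra derivation of $F$ can then appear), at the cost of heavier recursion bookkeeping in the conversion; the paper's interleaved induction is more compact but hides the same structure inside the shape of its induction hypothesis.
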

\begin{proof}
Given $G\in C_{b}^{1,2}\left(\Lambda\right)$, $k\in\mathbb{N}$ and
$I=\left(\alpha_{1},\ldots,\alpha_{k}\right)\in\mathcal{A}$. We show
that there exists a constant $C=C\left(d,k\right)$, only depending
on $d$ and $k$, s.t. $\forall t>0$ 
\begin{eqnarray*}
 &  & \mathbb{E}\left(\int_{0<t_{1}<\dots<t_{k}<t}G(t_{1},Y)\circ dB_{t_{1}}^{\alpha_{1}}\dots\circ dB_{t_{k}}^{\alpha_{k}}\right)^{2}\\
 & \leq & C\left(t^{\|(\alpha_{1},\ldots,\alpha_{k})\|}\|G\|_{\infty}^{2}+(1-\delta_{0,\alpha_{1}})t^{\|(\alpha_{1},\ldots,\alpha_{k})\|+1}\|\bar{V}_{\alpha_{1}}G\|_{\infty}^{2}\right).
\end{eqnarray*}
The result then follows from Theorem \ref{pro:Euler approx} by applying
this to $G=\overline{V}_{i_{1}}\cdots\overline{V}_{i_{N}}\cdot F$
(without loss of generality one can take $s=0$). We assume for induction
that for any $j\leq k$, 
\begin{eqnarray*}
 &  & \mathbb{E}\left(\int_{0<t_{1}<\dots<t_{k}<t}G(t_{1},Y)\circ dB_{t_{1}}^{\alpha_{1}}\dots\circ dB_{t_{j}}^{\alpha_{j}}\right)^{2}\\
 & \leq & C\left(j,d\right)\left(t^{\|(\alpha_{1},\ldots,\alpha_{j})\|}\|G\|_{\infty}^{2}+(1-\delta_{0,\alpha_{1}})t^{\|(\alpha_{1},\ldots,\alpha_{j})\|+1}\|\bar{V}_{\alpha_{1}}G\|_{\infty}^{2}\right).
\end{eqnarray*}
for all $t>0$. We will prove the claim for $I=(\alpha_{1},\ldots,\alpha_{k+1})$.
First assume that $\alpha_{k+1}=0$. In this case we have 
\begin{eqnarray*}
 &  & \mathbb{E}\left(\int_{0}^{t}\int_{0<t_{1}<\dots<t_{k}<t_{k+1}}G(t_{1},Y)\circ dB_{t_{1}}^{\alpha_{1}}\dots\circ dB_{t_{k}}^{\alpha_{k}}dt_{k+1}\right)^{2}\\
 & = & \mathbb{E}\left(t\int_{0}^{1}\int_{0<t_{1}<\dots<t_{k}<t_{k+1}t}G(t_{1},Y)\circ dB_{t_{1}}^{\alpha_{1}}\dots\circ dB_{t_{k}}^{\alpha_{k}}dt_{k+1}\right)^{2}\\
 & \leq & t^{2}\mathcal{\mathbb{E}}\left[\int_{0}^{1}\left(\int_{0<t_{1}<\dots<t_{k}<t_{k+1}t}G(t_{1},Y)\circ dB_{t_{1}}^{\alpha_{1}}\dots\circ dB_{t_{k}}^{\alpha_{k}}\right)^{2}dt_{k+1}\right]\\
 & = & t^{2}\int_{0}^{1}\mathbb{E}\left(\int_{0<t_{1}<\dots<t_{k}<t_{k+1}t}G(t_{1},Y)\circ dB_{t_{1}}^{\alpha_{1}}\dots\circ dB_{t_{k}}^{\alpha_{k}}\right)^{2}dt_{k+1}
\end{eqnarray*}
by using the Jensen inequality. Assuming the inductive hypothesis
we estimate that last line as 
\begin{eqnarray*}
 & \leq & C(k,d)t^{2}\int_{0}^{1}\left((ts)^{\|(\alpha_{1},\ldots,\alpha_{k})\|}\|G\|_{\infty}^{2}+(1-\delta_{0,\alpha_{1}})(ts)^{\|(\alpha_{1},\ldots,\alpha_{k})\|+1}\|\bar{V}_{\alpha_{1}}G\|_{\infty}^{2}\right)ds\\
 & \leq & C(k+1,d)\left(t^{\|(\alpha_{1},\ldots,\alpha_{k+1})\|}\|G\|_{\infty}^{2}+(1-\delta_{0,\alpha_{1}})t^{\|(\alpha_{1},\ldots,\alpha_{k+1})\|+1}\|\bar{V}_{\alpha_{1}}G\|_{\infty}^{2}\right).
\end{eqnarray*}
 Now suppose $\alpha_{k+1}\neq0$. We have if $\alpha_{k}\neq0$ (it
easy to see that $\alpha_{k}=0$ follows from a similar calculation)
\begin{eqnarray*}
 &  & \mathbb{E}\left(\int_{0<t_{1}<\dots<t_{k+1}<t}G(t_{1},Y)\circ dB_{t_{1}}^{\alpha_{1}}\dots\circ dB_{k+1}^{\alpha_{k+1}}\right)^{2}\\
 & = & \mathbb{E}\left(\int_{0<t_{1}<\dots<t_{k+1}<t}G(t_{1},Y)\circ dB_{t_{1}}^{\alpha_{1}}\dots\circ dB_{k}^{\alpha_{k}}dB_{k+1}^{\alpha_{k+1}}\right.\\
 &  & +\left.\frac{1}{2}\left[\int_{0<t_{1}<\dots<t_{k}<.}G(t_{1},Y)\circ dB_{t_{1}}^{\alpha_{1}}\dots\circ dB_{t_{k}}^{\alpha_{k}},B^{\alpha_{k+1}}\right]_{t}\right)^{2}\\
 & \leq & 2\mathbb{E}\left(\int_{0<t_{1}<\dots<t_{k+1}<t}G(t_{1},Y)\circ dB_{t_{1}}^{\alpha_{1}}\dots\circ dB_{t_{k}}^{\alpha_{k}}dB_{t_{k+1}}^{\alpha_{k+1}}\right)^{2}\\
 &  & +2\delta_{\alpha_{k},\alpha_{k+1}}\mathbb{E}\left(\int_{0<t_{1}<\dots<t_{k-1}<s<t}G(t_{1},Y)\circ dB_{t_{1}}^{\alpha_{1}}\dots\circ dB_{k-1}^{\alpha_{k-1}}ds\right)^{2}
\end{eqnarray*}
Note that we can bound the second term in the sum using the same arguments
as in the case $\alpha_{k+1}=0$. For the first term we estimate the
Ito integral, 
\begin{eqnarray*}
 &  & \mathbb{E}\left(\int_{0<t_{1}<\dots<t_{k+1}<t}G(t_{1},Y)\circ dB_{t_{1}}^{\alpha_{1}}\dots\circ dB_{t_{k}}^{\alpha_{k}}dB_{t_{k+1}}^{\alpha_{k+1}}\right)^{2}\\
 & = & \int_{0}^{t}\mathbb{E}\left(\int_{0<t_{1}<\dots<t_{k+1}}G(t_{1},Y)\circ dB_{t_{1}}^{\alpha_{1}}\dots\circ dB_{t_{k}}^{\alpha_{k}}\right)^{2}dt_{k+1}\\
 & \leq & C(k,d)\int_{0}^{t}\left(t_{k+1}^{\|(\alpha_{1},\ldots,\alpha_{k})\|}\|G\|_{\infty}^{2}+(1-\delta_{0,\alpha_{1}})t_{k+1}^{\|(\alpha_{1},\ldots,\alpha_{k})\|+1}\|\bar{V}_{\alpha_{1}}G\|_{\infty}^{2}\right)dt_{k+1}\\
 & \leq & C(k+1,d)\left(t^{\|(\alpha_{1},\ldots,\alpha_{k})\|+1}\|G\|_{\infty}^{2}+(1-\delta_{0,\alpha_{1}})t^{\|(\alpha_{1},\ldots,\alpha_{k})\|+2}\|\bar{V}_{\alpha_{1}}G\|_{\infty}^{2}\right)
\end{eqnarray*}
using the inductive hypothesis. Finally we prove the base case of
the induction: Recall that $Y$ is $\mathbb{R}^{e}$-valued and $\overline{V}_{i}=(0,V_{i}^{1},\ldots,V_{i}^{e})$.
Similar to proposition \ref{cor:Ito for FDE}, we see 
\begin{eqnarray*}
G_{t}\left(Y\right)-G_{0}\left(Y\right) & = & \sum_{j=1}^{d}\int_{0}^{t}\overline{V}_{j}\cdot G(t,Y)dB_{t}^{i}+BV_{t}
\end{eqnarray*}
with $BV$ a continuous bounded variation process. Thus for $\alpha_{1}\neq0$
we have 
\[
d\left[G(\cdot,Y),B^{\alpha_{1}}\right]_{t}=\sum_{i=1}^{d}\overline{V}_{i}\cdot G(t,Y)d\left[B^{i},B^{\alpha_{1}}\right]_{t}=\overline{V}_{\alpha_{1}}\cdot G(t,Y)dt.
\]
Hence, 
\begin{eqnarray*}
\int_{0}^{t}G(s,Y)\circ dB_{s}^{\alpha_{1}} & = & \int_{0}^{t}G(r,Y)dB_{r}^{\alpha_{1}}+\frac{1}{2}\left[G(\cdot,Y),B^{\alpha_{1}}\right]_{t}\\
 & = & \int_{0}^{t}G(r,Y)dB_{r}^{\alpha_{1}}+\frac{1}{2}\int_{0}^{t}\overline{V}_{\alpha_{1}}G(r,Y)dr
\end{eqnarray*}
which implies the base case of the induction for $\alpha_{1}\neq0$.
The estimate for the case $\alpha_{1}=0$, i.e. $B_{t}^{\alpha_{1}}=t$
is obvious. 
\end{proof}
To demonstrate that above theorem leads to concrete, easy to use,
estimates we conclude with a toy example which can be verified (by
a longer calculation) with standard Ito calculus.
\begin{example}
Take a $1$-dimensional Brownian motion $B$, $f,g,V_{1}\in C_{b}^{\infty}\left(\mathbb{R},\mathbb{R}\right)$
and let $dY=V_{1}\left(Y_{t}\right)\circ dB$. Consider 
\[
F\left(t,x\right)=f\left(\int_{0}^{t}g\left(x_{r}\right)dr\right).
\]
We immediately get $\partial_{0}F\left(t,x\right)=g\left(x_{t}\right)f'\left(\int_{0}^{t}g\left(x_{r}\right)dr\right)$,
$\partial_{1}F\left(t,x\right)=0$, $\partial_{1}\partial_{0}F\left(t,x\right)=g'\left(x_{t}\right)f'\left(\int_{0}^{t}g\left(x_{r}\right)dr\right)$
and similarly one can calculate higher derivatives. The first nontrivial
case of corollary \ref{cor:error} applied to this example is the
step $m=3$ approximation, 
\begin{eqnarray*}
F_{t}\left(Y\right)-F_{s}\left(Y\right) & = & \partial_{0}F_{s}\left(Y\right)\int_{s<r<t}\circ dr+V_{1}\left(Y_{s}\right)\partial_{1}\partial_{0}F_{s}\left(Y\right)\int_{s<r_{1}<r_{2}<t}\circ dB_{r_{1}}\circ dr_{2}+R_{st}^{3}
\end{eqnarray*}
with $\mathbb{E}\left[\left|R_{st}^{3}\right|^{2}\right]^{1/2}\leq c\sup_{\begin{subarray}{c}
I\in\left\{ \left(1,1,1,0\right),\left(1,1,0\right),\left(0,1,0\right),\left(0,0\right)\right\} \\
\left(r,y\right)\in\Lambda
\end{subarray}}\left(\left|t-s\right|^{\bigl\Vert I\bigr\Vert/2}\left|\overline{V}_{I}\cdot F\left(r,y\right)\right|\right)$.
\end{example}

\subsection*{Acknowledgment. }

Christian Litterer and Harald Oberhauser were partially supported
by the European Unions Seventh Framework Programme, ERC grant agreement
258237.

\bibliographystyle{plain}
\bibliography{/home/hd/Dropbox/projects/BibteX/roughpaths,/homes/stoch/oberhaus/Dropbox/projects/BibteX/roughpaths}

\def\cprime{$'$} \def\cprime{$'$}
\begin{thebibliography}{10}

\bibitem{azencott-82}
Robert Azencott.
\newblock Formule de {T}aylor stochastique et d\'eveloppement asymptotique
  d'int\'egrales de {F}eynman.
\newblock In {\em Seminar on Probability, XVI, Supplement}, pages 237--285.
  Springer, Berlin, 1982.

\bibitem{benarous-89}
G{\'e}rard Ben~Arous.
\newblock Flots et s\'eries de {T}aylor stochastiques.
\newblock {\em Probab. Theory Related Fields}, 81(1):29--77, 1989.

\bibitem{RogerW1976167}
Roger~W. Brockett.
\newblock Volterra series and geometric control theory.
\newblock {\em Automatica}, 12(2):167 -- 176, 1976.

\bibitem{castell}
Fabienne Castell.
\newblock Asymptotic expansion of stochastic flows.
\newblock {\em Probab. Theory Related Fields}, 96(2):225--239, 1993.

\bibitem{chen-57}
Kuo-Tsai Chen.
\newblock Integration of paths, geometric invariants and a generalized
  {B}aker-{H}ausdorff formula.
\newblock {\em Ann. of Math. (2)}, 65:163--178, 1957.

\bibitem{chen-58}
Kuo-Tsai Chen.
\newblock Integration of paths---a faithful representation of paths by
  non-commutative formal power series.
\newblock {\em Trans. Amer. Math. Soc.}, 89:395--407, 1958.

\bibitem{ContFournie_Pathwise}
R.~Cont and D.~Fournie.
\newblock Change of variable formulas for non-anticipative functionals on path
  space.
\newblock {\em Journal of Functional Analysis}, 259:1043--1072, 2010.

\bibitem{ContFournie}
R.~Cont and D.~Fournie.
\newblock Functional ito calculus and stochastic integral representation of
  martingales.
\newblock {\em Annals of Probability}, 2011.
\newblock In press.

\bibitem{Duprire_FIto}
B.~Dupire.
\newblock Functional {I}to calculus.
\newblock 2010.
\newblock SSRN.

\bibitem{2011arXiv1109.5971E}
I.~{Ekren}, C.~{Keller}, N.~{Touzi}, and J.~{Zhang}.
\newblock {On Viscosity Solutions of Path Dependent PDEs}.
\newblock {\em ArXiv e-prints}, September 2011.

\bibitem{Fawcettthesis}
Thomas Fawcett.
\newblock {\em Problems in stochastic analysis: connections between rough paths
  and non-commutative harmonic analysis}.
\newblock PhD thesis, University of Oxford, 2003.

\bibitem{fliess1981fonctionnelles}
M.~Fliess.
\newblock Fonctionnelles causales non lin{\'e}aires et ind{\'e}termin{\'e}es
  non commutatives.
\newblock {\em Bull. Soc. Math. France}, 109(1):3--40, 1981.

\bibitem{hu}
Yao~Zhong Hu.
\newblock S\'erie de {T}aylor stochastique et formule de
  {C}ampbell-{H}ausdorff, d'apr\`es {B}en {A}rous.
\newblock In {\em S\'eminaire de Probabilit\'es, XXVI}, volume 1526 of {\em
  Lecture Notes in Math.}, pages 579--586. Springer, Berlin, 1992.

\bibitem{kloeden-platen-92}
Peter Kloeden and Eckhard Platen.
\newblock {\em Numerical solution of stochastic differential equations}.
\newblock Springer-Verlag, Berlin, 1992.

\bibitem{MR2052260}
Terry Lyons and Nicolas Victoir.
\newblock Cubature on {W}iener space.
\newblock {\em Proc. R. Soc. Lond. Ser. A Math. Phys. Eng. Sci.},
  460(2041):169--198, 2004.

\bibitem{2011arXiv1106.1144P}
S.~{Peng}.
\newblock Note on viscosity solution of path-dependent pde and g-martingales.
\newblock {\em ArXiv e-prints}, 2011.

\bibitem{2011arXiv1108.4317P}
S.~{Peng} and F.~{Wang}.
\newblock {BSDE, Path-dependent PDE and Nonlinear Feynman-Kac Formula}.
\newblock {\em ArXiv e-prints}, August 2011.

\bibitem{strichartz-87}
Robert~S. Strichartz.
\newblock The {C}ampbell-{B}aker-{H}ausdorff-{D}ynkin formula and solutions of
  differential equations.
\newblock {\em J. Funct. Anal.}, 72(2):320--345, 1987.

\bibitem{stroock}
D.~W. Stroock.
\newblock {Homogeneous chaos revisited}.
\newblock {\em Seminaire de probabilites (Strasbourg)}, 21:1--7, 1987.

\bibitem{sussmann1975semigroup}
H.J. Sussmann.
\newblock Semigroup representations, bilinear approximation of input-output
  maps, and generalized inputs.
\newblock {\em Mathematical systems theory}, 131, 1975.

\end{thebibliography}

\end{document}